\theoremstyle{plain}
\newtheorem{thm}{Theorem}[section]
\newtheorem{cor}[thm]{Corollary}
\newtheorem{lem}[thm]{Lemma}
\newtheorem{prop}[thm]{Proposition}
\theoremstyle{definition}
\newtheorem{defi}[thm]{Definition}
\newtheorem{defis}[thm]{Definitions}
\newtheorem{exam}[thm]{Example}
\newtheorem{exams}[thm]{Examples}
\theoremstyle{remark}
\newtheorem{rem}[thm]{Remark}
\def\clsp{\overline{\operatorname{span}}}
\def\on{\operatorname}
\title[Group actions]{Group actions on labeled graphs and their $C^*$-algebras}
\date{\today}
\author[Teresa Bates]{Teresa Bates}
\address{Teresa Bates \\
School of Mathematics and Statistics, \\
The University of NSW \\
UNSW  Sydney 2052 \\ Australia} 
\email{teresa@unsw.edu.au}
\author[David Pask]{David Pask}
\address{David Pask, School of Mathematics and Applied Statistics, \\
University of Wollongong, \\ NSW 2522, Australia}
\email{dpask@uow.edu.au}
\author[Paulette Willis]{Paulette Willis}
\address{Paulette Willis, Department of Mathematics
 651 PGH, \\
University of Houston, \\
Houston, TX 77204-3008}
\email{pnwillis@math.uh.edu}
\begin{document}

\numberwithin{equation}{section}

\begin{abstract}
We introduce the notion of the action of a group on a labeled
graph and the quotient object, also a labeled graph. We define
a skew product labeled graph and use it to prove
a version of the Gross-Tucker theorem for labeled graphs.
We then apply these results to the $C^*$-algebra associated to
a labeled graph and provide some applications in nonabelian
duality.
\end{abstract}

\keywords{$C^*$-algebra, labeled
graph, group action, skew product, nonabelian duality.}

\subjclass[2010]{Primary: 46L05, Secondary: 37B10}

\thanks{The second author was supported by the Australian Research Council.  The third author was supported by the NSF Mathematical Sciences Postdoctoral Fellowship DMS-1004675, the University of Iowa Graduate College Fellowship as part of the Sloan Foundation Graduate Scholarship Program, and the University of Iowa Department of Mathematics NSF VIGRE grant DMS-0602242.}

\maketitle

\section{Introduction}

A labeled graph $( E , \mathcal{L} )$ is a directed graph $E=(E^0,E^1,r,s)$ together with
a function $\mathcal{L} : E^1 \to \mathcal{A}$ where $\mathcal{A}$ is called the alphabet.
Labeled graphs are a model for studying symbolic dynamical systems;
the labeled path space is a shift space whose properties may be inferred
from the labeled graph presentation (cf.\ \cite{lm}). Labeled graph algebras were introduced in \cite{bp,bp2}, their theory has been developed in \cite{bcp,jk,jkp} and has found applications in mirror quantum spheres in \cite{rs}.

The main purpose of this paper is to  introduce the notion of a group action on a labeled graph and  study the crossed products formed by the induced action on the associated $C^*$-algebra. Before we do this we update the definition of the $C^*$-algebra associated to a labeled graph. 
In order to circumvent a technical error in the literature we add a new condition to ensure that the resulting $C^*$-algebra satisfies a version of
the gauge-invariant uniqueness Theorem.
Since a directed graph is a labeled graph where $\mathcal{L}$ is injective, we will
be generalizing a suite of results for directed graphs and their $C^*$-algebras (see \cite{dpr,kp,kqr}).
This is not as straightforward as it may seem since two distinct edges may carry
the same label,  so new techniques will be needed to prove our results.

An action of a group $G$ on a labeled graph $(E, \mathcal{L} )$ is an action of $G$
on $E$ together with a compatible action of $G$ on $\mathcal{A}$ so that we may sensibly define the quotient object $(E/G, \mathcal{L}/G)$ as a labeled graph.
In \cite{gt} Gross and Tucker introduce the notion of a skew product graph $E \times_c G$ formed from
a map $c : E^1 \to G$ and show that $G$ acts freely on $E \times_c G$ with quotient $E$.
The Gross-Tucker Theorem~\cite[Theorem 2.1.2]{gt} takes a free action of $G$ on $E$ and recovers (up to equivariant isomorphism)
the original graph and action from the quotient graph $E/G$. One might speculate
that a similar result holds for free actions on labeled graphs. In section \ref{sec:group_actions}
we describe a skew product construction for labeled graphs and prove a version of
the Gross-Tucker theorem for free actions on labeled graphs (Theorem~\ref{gttlg}). Since a group action
on a labeled graph is a pair of compatible actions, a new approach is needed: In Definition~\ref{splg}
we define a skew product labeled graph $( E \times_c G , \mathcal{L}_d )$ to be a skew-product graph
$E \times_c G$ together with a labeling $\mathcal{L}_d : ( E \times_c G )^1 \to \mathcal{A} \times G$ which is defined using a new function $d : E^1 \to G$.
The purpose of the new function $d$ is to accommodate the possibility that two edges carry the same label.
In Remark~\ref{explaind} we discuss the importance of $d$.

We then turn our attention to applications of our  results on labeled graph actions to the $C^*$-algebras, $C^* ( E , \mathcal{L} )$ we have associated to labeled graphs.

A function $c : E^1 \to G$ on a directed graph gives rise to a coaction $\delta$ of $G$ on $C^* (E)$
such that $C^* (E) \times_\delta G \cong C^* ( E \times_c G)$ (cf.\
\cite{kqr}). In Proposition~\ref{coact} we show that a skew product labeled
graph $( E \times_c G , \mathcal{L}_d )$ gives rise to a coaction $\delta$ of $G$
on $C^* ( E , \mathcal{L} )$ provided that $c : E^1 \to G$ is
consistent with the labeling map $\mathcal{L} : E^1 \to \mathcal{A}$. Then in Theorem~\ref{thm:skewiscoact}
we show that $C^* (E , \mathcal{L} ) \times_\delta G \cong C^* ( E \times_c G , \mathcal{L}_\mathbf{1} )$ where  $\mathbf{1}: E^1 \to G$ is given by $\mathbf{1} (e) = 1_G$ for all $e \in E^1$. Since this isomorphism is equivariant for the dual action of $G$ on $C^* (E , \mathcal{L} ) \times_\delta G$ and the action of $G$ on $C^* ( E \times_c G , \mathcal{L}_\mathbf{1} )$ induced by left translation of $G$ on $( E \times_c G , \mathcal{L}_\mathbf{1} )$, Takai duality then gives us
\[
C^* ( E \times_c G , \mathcal{L}_{\mathbf{1}} ) \times_{\tau,r} G \cong C^* ( E , \mathcal{L} ) \otimes \mathcal{K} ( \ell^2 ( G ) )
\]

\noindent in Corollary~\ref{lciso}. Indeed if $d$ is consistent with the labeling map $\mathcal{L} : E^1 \to \mathcal{A}$, then
$C^* ( E \times_c G , \mathcal{L}_d )$ is equivariantly isomorphic to $C^* ( E \times_c G , \mathcal{L}_\mathbf{1} )$ (see Proposition~\ref{prop:onewilldo}). 

For a directed graph $E$ a function $c : E^1 \to \mathbb{Z}$ given by $c(e)=1$ for all $e \in E^1$ gives rise to a skew product graph $E \times_c G$ whose $C^*$-algebra which is strongly Morita equivalent to the fixed point algebra $C^* (E)^\gamma$ for the gauge action. In the case of labelled graphs if $c,d : E^1 \to \mathbb{Z}$ are given by $c(e) = 1$, $d(e)=0$ for all $e \in E^1$, then $C^* ( E \times_c G , \mathcal{L}_d )$ is strongly Morita equivalent to $C^* ( E , \mathcal{L} )^\gamma$ (see Theorem~\ref{thm:fpa}).

An action $\alpha$ of $G$ on a directed graph $E$ induces an action of $G$ on $C^* (E)$, moreover if the action
is free, then using the Gross--Tucker Theorem we have
\begin{equation} \label{eq:kp}
C^* (E) \times_{\alpha,r} G \cong C^* ( E/G ) \otimes \mathcal{K} ( \ell^2 ( G ) )
\end{equation}

\noindent by \cite[Corollary 3.10]{kp}. In Theorem~\ref{thm:induceact} we show that an action of $G$ on
$(E , \mathcal{L} )$ induces an action of $G$ on $C^* (E , \mathcal{L} )$.
If we wish to use the Gross-Tucker Theorem for labeled graphs to prove the labeled graph analog \eqref{eq:kp} we
need to know when the maps $c,d : (E/G)^1 \to G$ provided by Theorem~\ref{gttlg} are consistent with the quotient labeling
$\mathcal{L}/G$. The answer to this question is provided by Theorem~\ref{gttlc}: It happens precisely when the action $\alpha$ has
a fundamental domain. Hence, if the free action of $G$ on $( E , \mathcal{L} )$ has a fundamental domain, then in  Corollary~\ref{isomorphisms} we show that
\[
C^* ( E , \mathcal{L} ) \times_{\alpha,r} G \cong C^* ( E / G  , \mathcal{L} / G ) \otimes \mathcal{K} ( \ell^2 ( G ) ) .
\]

\section{Labeled Graphs and their $C^*$-algebras}

We begin with a collection of definitions, which are taken from \cite{bp}.
A \emph{directed graph} $E=(E^0, E^1, r, s)$ consists of a vertex set $E^0$, an edge set $E^1$,
and range and source maps $r,s: E^1 \to E^0$. We shall assume  throughout this paper that $E$ is row-finite and essential, that is
\[
 r^{-1}(v)\neq \emptyset \text{ and }  1 \le \# s^{-1} (v) < \infty
\]

\noindent for all $v \in E^0$. We let $E^n$ denote the set of paths of length $n$ and set $E^+ = \cup_{n \ge 1} E^n$.

\begin{defi}
A \emph{labeled graph} $(E, \mathcal{L})$ over an alphabet $\mathcal{A}$ consists of a
directed graph $E$ together with a labeling map $\mathcal{L} : E^1 \rightarrow \mathcal{A}$.
\end{defi}

\noindent
We may assume that $\mathcal{L}: E^1 \to \mathcal{A}$ is surjective.  Let $\mathcal{A}^*$ be the collection of all \emph{words} in the symbols of $\mathcal{A}$.  For $n \ge 1$ the map $\mathcal{L}$ extends naturally to a map $\mathcal{L}: E^n \rightarrow \mathcal{A}^*$: for $\lambda = \lambda_1 \cdots \lambda_n \in E^n$ we set $\mathcal{L} ( \lambda ) = \mathcal{L}( \lambda_1 ) \cdots \mathcal{L} ( \lambda_n )$ and we say that $\lambda$ is a \emph{representative} of the labeled path $\mathcal{L} ( \lambda )$.  Let $\mathcal{L}( E^n )$ denote the collection of all labeled paths in $( E, \mathcal{L} )$ of length $n$. Then $\mathcal{L}^+(E)= \cup_{n \geq 1} \mathcal{L} ( E^n )$ denotes the collection of all labeled paths in $(E, \mathcal{L})$, that is all words in the alphabet $\mathcal{A}$ which may be represented by paths in $E$.

\begin{exams} \label{trivex}
  \begin{enumerate}[(a)]
    \item \label{triv} Every directed graph $E$ gives rise to a labeled graph $( E , \mathcal{L}_\tau )$ over the alphabet $E^1$ where $\mathcal{L}_{\tau}: E^1 \to E^1$ is the identity map.
    \item \label{fishy}  The directed graph $E$ whose edges $e,f,g$ have been labeled using the alphabet $\{0,1\}$ as shown below is an example of a labeled graph
\[
\begin{tikzpicture}
    \def\vertex(#1) at (#2,#3){
        \node[inner sep=0pt, circle, fill=black] (#1) at (#2,#3)
        [draw] {.};
    }
    \node[inner sep=3pt, anchor = south] at (-2.5,0) { {\Large $\scriptstyle (E, \mathcal{L}):=$}};
    \vertex(vertA) at (0,0)
    \node[inner sep=3pt, anchor = west] at (vertA.east)
    {$\scriptstyle v$};
     \vertex(vertB) at (2,0)
    \node[inner sep=3pt, anchor = west] at (vertB.east)
    {$\scriptstyle w$};
    \node[inner sep=2pt, anchor = east] at (-1,0) {$\scriptstyle 1$};
    \node[inner sep=2pt, anchor = west] at (-1,0) {$\scriptstyle e$};
    \draw[style=semithick, -latex] (vertA.north west)
        .. controls (-0.25,0.75) and (-1,0.5) ..
        (-1,0)
        .. controls (-1,-0.5) and (-0.25,-0.75) ..
        (vertA.south west);
    \draw[style=semithick, -latex] (vertA.north east)
    parabola[parabola height=0.5cm] (vertB.north west);
    \node[inner sep=2pt, anchor = south] at (1,0.5) {$\scriptstyle 0$};
    \node[inner sep=2pt, anchor = north] at (1,0.5) {$\scriptstyle f$};
    \draw[style=semithick, -latex] (vertB.south west)
    parabola[parabola height=-0.5cm] (vertA.south east);
    \node[inner sep=2pt, anchor = north] at (1,-0.5) {$\scriptstyle 0$};
    \node[inner sep=2pt, anchor = south] at (1,-0.5) {$\scriptstyle g$};

\end{tikzpicture}
\]
\end{enumerate}
\end{exams}

\noindent Let $( E , \mathcal{L} )$ be a labeled graph. Then for
$\beta \in \mathcal{L}^+ ( E )$ we set
\[
r ( \beta ) = \{ r ( \lambda ) : \mathcal{L} ( \lambda ) = \beta \}  , \quad s ( \beta ) = \{ s ( \lambda ) : \mathcal{L} ( \lambda ) = \beta \} .
\]

\noindent
For $A \subseteq E^0$
and $\beta \in \mathcal{L}^+(E)$ the \emph{relative range of $\beta$
with respect to $A$} is 
\begin{equation*}
r ( A, \beta) = \{r ( \lambda ) : \lambda \in E^+, \mathcal{L} ( \lambda ) =
\beta, s( \lambda ) \in A \}.
\end{equation*}

The labeled graph $(E, \mathcal{L})$ is \emph{left-resolving}, if for all $v \in
E^0$ the map $\mathcal{L}$ restricted to 
$r^{-1}(v)$ is injective. The labeled graph $(E, \mathcal{L})$ is \emph{weakly left-resolving} if for all $A,B \subseteq E^0$ and $\beta \in \mathcal{L}^+ (E)$ we have 
\[ 
r ( A \cap B , \beta ) = r ( A , \beta ) \cap r ( B , \beta ) .
\]

\noindent
If $( E , \mathcal{L} )$ is left-resolving then it is weakly left-resolving.
Examples~\ref{trivex}~\eqref{triv} and ~\eqref{fishy} are examples of
left-resolving labeled graphs. 

A collection $\mathcal{B} \subseteq 2^{E^0}$ of subsets of $E^0$ is \textit{closed under relative ranges for $( E , \mathcal{L} )$} if for all $A \in \mathcal{B}$ and $\beta \in \mathcal{L}^+ (E)$ we have $r ( A , \beta ) \in \mathcal{B}$. If $\mathcal{B}$ is closed under relative ranges for $(E , \mathcal{L} )$, contains $r ( \beta )$ for all $\beta \in \mathcal{L}^+ (E)$ and is also closed under finite intersections and unions, then $\mathcal{B}$ is \textit{accommodating} for $( E , \mathcal {L} )$ and the triple $( E , \mathcal{L} , \mathcal{B})$ is called a \textit{labeled space}. 
Let $\mathcal{E}^{0.-}$ be the smallest accommodating collection  of subsets of $E^0$ for $(E, \mathcal{L})$.

\begin{defi}
For $A \subseteq E^0$ and $n \geq 1$, let $\mathcal{L}^n_A := \{ \beta \in
\mathcal{L} ( E^n ) : A \cap s(\beta) \neq \emptyset \}$ denote those
labeled paths of length $n$ whose source intersects $A$
nontrivially.
\end{defi}

\noindent
Though $E$ is row finite it is possible for $\mathcal{L}^1_{A}$ to be infinite; for example if $\mathcal{L}$ is trivial, then $\mathcal{L}^1_{E^0} = E^1$, which is infinite if $E^1$ is infinite. A labeled space $( E , \mathcal{L} , \mathcal{B} )$ is \textit{set-finite} if $\mathcal{L}^1_A$ is finite for all $A \in \mathcal{B}$. The following definition is given in \cite{bp}.

\begin{defi} \label{lgdef}
A representation of a weakly left-resolving, set-finite labeled space $( E , \mathcal{L} , \mathcal{B}  )$
consists of projections $\{ p_A : A \in \mathcal{B} \}$ and
partial isometries $\{ s_a : a \in \mathcal{A} \}$ such that
\begin{itemize}

\item[(i)] If $A, B \in \mathcal{B}$, then $p_A p_B = p_{A \cap
B}$ and $p_{A \cup B} = p_A + p_B - p_{A \cap B}$, where
$p_\emptyset = 0$.

\item[(ii)] If $a \in \mathcal{A}$ and $A \in
\mathcal{B}$, then $p_A s_a = s_a p_{r ( A, a )}$.

\item[(iii)] If $a , b \in \mathcal{A}$, then $s_a^*
s_a = p_{r( a )}$ and $s_{a}^*s_{b} = 0$ unless $a = b$.

\item[(iv)] For $A \in \mathcal{B}$ we have
\[
p_A = \sum_{a \in \mathcal{L}^1_A} s_{a} p_{r( A , a )} s_{a}^* .
\]
\end{itemize}
\end{defi}

\noindent
$C^* ( E , \mathcal{L}  , \mathcal{B} )$ is the
universal $C^*$-algebra generated by a representation of $( E ,
\mathcal{L} , \mathcal{B}  )$. Let $\gamma : \mathbb{T} \to \operatorname{Aut} C^* ( E , \mathcal{L} , \mathcal{B} )$ be the gauge action determined by
\[
\gamma_z p_A = p_A , \gamma_z s_a = z s_a \text{ for } A \in \mathcal{B},  a \in
\mathcal{A} .
\]

\begin{rem} \label{rem:giut}
The gauge invariant uniqueness Theorem for $C^* ( E , \mathcal{L}  , \mathcal{B} )$ as stated in \cite[Theorem 5.3]{bp} is
incorrect. The authors are grateful to Gow for pointing out the error.
The problem
arises in \cite[Lemma 5.2 (ii)]{bp} as it not possible to prove that the projection $r$
is nonzero under the hypotheses used in \cite{bp}. We are also grateful to Jeong
and Kim for pointing out an  mistake in the formula \cite[Remark 3.5]{bp2} and in \cite[Example 2.4]{jk} which is a direct result of the error discovered by Gow.

The problem in \cite[Lemma 5.2 (ii)]{bp} arises because, under the hypotheses on
a labeled space used in \cite{bp}, it is possible
to have $A \supsetneq B \in \mathcal{B}$ with $p_A = p_B$ in
$C^* ( E , \mathcal{L} , \mathcal{B})$. To rectify this problem we must assume that
$\mathcal{B}$ is closed under relative complements; that is
if $A,B \in \mathcal{B}$ are such that $A \supsetneq B$, then
$A \backslash B \in \mathcal{B}$. 
If $\mathcal{B}$ is closed under relative
complements then we also recover the formula in \cite[Remark 3.5]{bp2}.
\end{rem}

\noindent
Before stating the Gauge Invariant Uniqueness Theorem we give a corrected version of \cite[Lemma 5.2]{bp} using the new hypothesis.

\begin{lem} \label{new}
Let $(E, \mathcal{L} , \mathcal{B} )$ be a weakly left-resolving, set-finite labeled space where $\mathcal{B}$ is closed under relative complements and $\{s_a, p_{A} \}$ be
a representation $(E, \mathcal{L} , \mathcal{B} )$. Let $Y = \{ s_{\alpha_i} p_{A_i}
s_{\beta_i}^* : i = 1 , \ldots , N \}$ be a set of partial
isometries in $C^* ( E, \mathcal{L} , \mathcal{B}  )$ which is
closed under multiplication and taking adjoints. If $q$ is a minimal
projection in $C^* (Y)$ then either
\begin{itemize}
\item[(i)] $q = s_{\alpha_i} p_{A_i} s_{\alpha_i}^*$ for some $1
\le i \le N$

\item[(ii)] $q = s_{\alpha_i} p_{A_i} s_{\alpha_i}^* - q'$ where
$q' = \sum_{l=1}^m s_{\alpha_{k(l)}} p_{A_{k(l)}}
s_{\alpha_{k(l)}}^*$ and $1 \le i \le N$; moreover there is a
nonzero $r = s_{\alpha_i \beta} p_{r ( A_i , \beta ) } s_{\alpha_i
\beta}^* \in C^* ( E, \mathcal{L} , \mathcal{B}  )$ such that $q' r
=0$ and $q \ge r$.
\end{itemize}
\end{lem}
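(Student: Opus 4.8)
The plan is to follow the architecture of the proof of \cite[Lemma 5.2]{bp} that is being corrected here: $C^*(Y)$ is a finite-dimensional $C^*$-algebra, and its structure is read off from a commutative ``diagonal'' subalgebra; the one new ingredient is to invoke closure of $\mathcal{B}$ under relative complements precisely where the original argument implicitly used that distinct subsets give distinct projections (the failure flagged in Remark~\ref{rem:giut}).

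First I would normalise the generators: using Definition~\ref{lgdef}(ii),(iii) and $s_a=s_as_a^*s_a$ one absorbs range projections so that $A_i\subseteq C_{\alpha_i}\cap C_{\beta_i}$, where $p_{C_\gamma}:=s_\gamma^*s_\gamma$. Since $Y\cup\{0\}$ is closed under multiplication and adjoints, $C^*(Y)=\operatorname{span} Y$ is finite-dimensional. Let $P\subseteq Y$ be the self-adjoint idempotents in $Y$; by the normalisation these are exactly the $s_{\alpha_i}p_{A_i}s_{\alpha_i}^*$ with $\beta_i=\alpha_i$. The key computation is that for two such generators, $s_\alpha p_A s_\alpha^*\,s_\beta p_B s_\beta^*$ vanishes unless one of $\alpha,\beta$ is an initial segment of the other (otherwise $s_\alpha^*s_\beta=0$ by Definition~\ref{lgdef}(iii)), and when $\beta=\alpha\mu$ it equals $s_\beta\,p_{r(A,\mu)\cap B}\,s_\beta^*$, a common subprojection of both; here Definition~\ref{lgdef}(ii) and weak left-resolving are used to slide $p_A$ past $s_\mu$ as the relative range $r(A,\mu)$. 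Hence $D:=C^*(P)$ is commutative. Its atoms are obtained from the maximal members $s_{\alpha_i}p_{A_i}s_{\alpha_i}^*$ of $P$ by subtracting the join of the generators strictly below them; since every such generator shares the initial segment $\alpha_i$, that join is a sum $q'=\sum_{l=1}^m s_{\alpha_{k(l)}}p_{A_{k(l)}}s_{\alpha_{k(l)}}^*$, so each atom has either the shape of (i) or the shape $s_{\alpha_i}p_{A_i}s_{\alpha_i}^*-q'$ of (ii). Closure under relative complements is what guarantees these differences are nonzero and lie in $C^*(Y)$.

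I would then show each atom $q$ of $D$ is minimal in $C^*(Y)$: for $y=s_{\alpha_i}p_{A_i}s_{\beta_i}^*\in Y$ with $\alpha_i\ne\beta_i$, expanding $qyq$ and using $s_\alpha^*s_\beta=0$ for incomparable words forces $qyq=0$, while $qyq\in\{0,q\}$ for $y\in P$; thus $qC^*(Y)q=\mathbb{C}q$. For the ``moreover'' clause, $q=s_{\alpha_i}p_{A_i}s_{\alpha_i}^*-q'\ne 0$ means $A_i$ is not absorbed by the ranges appearing in $q'$; iterating Definition~\ref{lgdef}(iv) gives $p_{A_i}=\sum_{|\beta|=n}s_\beta\,p_{r(A_i,\beta)}\,s_\beta^*$, and for $n$ larger than every word-length occurring in $q'$ one of the summands yields a nonzero $r:=s_{\alpha_i\beta}\,p_{r(A_i,\beta)}\,s_{\alpha_i\beta}^*$ orthogonal to each term of $q'$; then $q'r=0$ and $r\le s_{\alpha_i}p_{A_i}s_{\alpha_i}^*-q'=q$.

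The main obstacle is twofold. First, the relative-range bookkeeping: one must verify the compatibility of $r(\,\cdot\,,\beta)$ with intersections and relative complements (for example $r(A\setminus B,\beta)=r(A,\beta)\setminus r(B,\beta)$, valid for weakly left-resolving spaces with $\mathcal{B}$ closed under relative complements) so that every projection produced actually lies in $C^*(Y)$ and, crucially, so that the atoms of $D$ and the projection $r$ are \emph{nonzero} --- the precise point that failed in \cite{bp} and that the new hypothesis on $\mathcal{B}$ is designed to fix. Second, and more delicate, is the passage from the atoms of the commutative diagonal $D$ to an \emph{arbitrary} minimal projection $q$ of $C^*(Y)$: one must control the block structure of $C^*(Y)$ as a direct sum of matrix algebras with diagonal $D$ and argue that, up to relabelling the indices of $Y$, such a $q$ is already of the stated form --- this is the step where I expect the bulk of the work to lie.
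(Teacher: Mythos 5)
The overall architecture is reasonable and close in spirit to the paper's (which simply quotes \cite[Lemma 4.4]{bp} for the normal form of a projection in $C^*(Y)$ rather than rebuilding the diagonal subalgebra from scratch), but your construction of the subordinate projection $r$ in the ``moreover'' clause has a genuine gap --- and it is exactly the gap this lemma is being restated to repair. You iterate Definition~\ref{lgdef}(iv) to write $s_{\alpha_i}p_{A_i}s_{\alpha_i}^* = \sum_\beta s_{\alpha_i\beta}\,p_{r(A_i,\beta)}\,s_{\alpha_i\beta}^*$ with $|\beta|=n$ large, and claim that some summand is orthogonal to every term of $q'$. That need not happen. Take $q = s_\alpha p_A s_\alpha^* - s_\alpha p_B s_\alpha^*$ with $\emptyset \neq B \subsetneq A$ (so $\alpha_{k(1)}=\alpha_i=\alpha$): after expanding both terms to a common length the words agree, and $s_{\alpha\beta}p_{r(A,\beta)}s_{\alpha\beta}^*$ is orthogonal to $s_{\alpha\beta}p_{r(B,\beta)}s_{\alpha\beta}^*$ only if $r(A,\beta)\cap r(B,\beta)=\emptyset$, i.e.\ only if $r(B,\beta)=\emptyset$; nothing forces this for any $\beta$. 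Choosing $n$ larger than every word-length occurring in $q'$ does not help, because the terms of $q'$ must be expanded by (iv) as well and their words then coincide with the $\alpha_i\beta$. If your claim were true, the original argument of \cite[Lemma 5.2]{bp} would go through unchanged and no new hypothesis would be needed.

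What $q\neq 0$ actually yields is weaker: there is some $\beta$ for which the union $Y_\beta$ of the relative ranges contributed by the expanded $q'$ under the word $\alpha_i\beta$ is a \emph{proper} subset of $r(A_i,\beta)$. The projection you want is $r = s_{\alpha_i\beta}\,p_{X}\,s_{\alpha_i\beta}^*$ with $X = r(A_i,\beta)\setminus Y_\beta$; it is here, and only here, that closure of $\mathcal{B}$ under relative complements is invoked (together with closure under finite unions, to know $Y_\beta\in\mathcal{B}$), giving $X\in\mathcal{B}$, $X\neq\emptyset$, hence $r\neq 0$, $q\ge r$ and $q'r=0$. You gesture at the identity $r(A\setminus B,\beta)=r(A,\beta)\setminus r(B,\beta)$ as an obstacle, but the fix is not a compatibility identity: it is to replace your $p_{r(A_i,\beta)}$ by $p_{X}$. (The remaining structural work you anticipate --- passing from atoms of the commutative diagonal to an arbitrary minimal projection of $C^*(Y)$ --- is handled in the paper by citing the normal form of \cite[Lemma 4.4]{bp}, so your first two paragraphs could be shortcut the same way.)
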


\begin{proof}
By \cite[Lemma 4.4]{bp} any projection in $C^* (Y)$ may be
written as
\[
\sum_{j=1}^n s_{\alpha_{i(j)}} p_{A_{i(j)}} s_{\alpha_{i(j)}}^* -
\sum_{l=1}^m s_{\alpha_{k(l)}} p_{A_{k(l)}} s_{\alpha_{k(l)}}^*
\]

\noindent where the projections in each sum are mutually
orthogonal and for each $l$ there is a unique $j$ such that
$s_{\alpha_{i(j)}} p_{A_{i(j)}} s_{\alpha_{i(j)}}^* \ge
s_{\alpha_{k(l)}} p_{A_{k(l)}} s_{\alpha_{k(l)}}^*$.

If $q = \sum_{j=1}^n s_{\alpha_{i(j)}} p_{A_{i(j)}}
s_{\alpha_{i(j)}}^* - \sum_{l=1}^m s_{\alpha_{k(l)}} p_{A_{k(l)}}
s_{\alpha_{k(l)}}^*$ is a minimal projection in $C^* (Y)$ then we
must have $n=1$. If $m=0$ then $q = s_{\alpha_i} p_{A_i}
s_{\alpha_i}^*$ for some $1 \le i \le N$. If $m \neq 0$ then
\[
q = s_{\alpha_i} p_{A_i} s_{\alpha_i}^* - \sum_{\ell=1}^m s_{\alpha_{k (\ell)}} p_{A_{k(\ell)}} s_{\alpha_{k(\ell)}}^* ,
\]

\noindent
where $A_i , A_{k(\ell)} \in \mathcal{B}$ for $1 \le \ell \le m$.
If we apply Definition~\ref{lgdef}~(iv) we may write 
\[
q = \sum_{j =1}^n s_{\alpha_i \beta_j} p_{r ( A_i , \beta_j )} s_{\alpha_i \beta_j}^* - \sum_{h=1}^t \sum_{\ell=1}^m
s_{\alpha_{k(\ell)} \kappa_h} p_{r(A_{k(\ell)},\kappa_h)} s_{\alpha_{k(\ell)} \kappa_h}^* 
\]

\noindent where all $\alpha_i \beta_j$ and $\alpha_{k(\ell)} \kappa_h$ have the same length.
Since $q$ is a nonzero projection there is $1 \le j \le n$ and  $H_j \subseteq \{ 1 , \ldots , t \} \times \{ 1 , \ldots , m \}$ such that 
$\alpha_i \beta_j = \alpha_{k(\ell)} \kappa_{h}$ for all $(h, \ell ) \in H_j$  and 
\[
Y_j := \bigcup_{(h,\ell) \in H_j} r(A_{k(\ell)},\kappa_h ) \subsetneq r ( A_i , \beta_j ) .
\]

\noindent  
Since $\mathcal{B}$ is closed under finite unions we have $Y_j \in \mathcal{B}$.
Then for this $j$ define 
$X_j = r ( A_i , \beta_j ) \backslash Y_j \neq
\emptyset$, then $X_j \in \mathcal{B}$ since $\mathcal{B}$ is closed under relative complements.
Hence the projection $r = s_{\alpha_i \beta_j} p_{X_j} s_{\alpha_i
\beta_j}^*$  is nonzero and $q \ge r$ since $X_j \subset r ( A_i , \beta_j )$. If we set $q'=s_{\alpha_i} p_{A_i} s_{\alpha_i}^* - q$ then since $X_j \cap Y_j = \emptyset$ we have $q' r = 0$
as required.
\end{proof}

\begin{thm} [Gauge invariant uniqueness Theorem] \label{GIUT}
Let $(E, \mathcal{L} , \mathcal{B} )$ be a weakly left-resolving, set-finite labeled space where $\mathcal{B}$ is closed under relative complements and $\{S_a, P_{A} \}$ be
a representation $(E, \mathcal{L} , \mathcal{B} )$ on Hilbert space.  Take $\pi_{S, P}$ to be the representation of $C^*(E, \mathcal{L} , \mathcal{B})$ satisfying $\pi_{S, P} (s_a)= S_a$ and
$\pi_{S, P} (p_{A})= P_{A}$.  Suppose that $P_{A} \neq 0$
for all $\emptyset \neq A \in \mathcal{B}$ and that there is a strongly continuous
action $\gamma'$ of $\mathbb{T}$ on $C^*(\{S_a, P_{A} \})$ such that for
all $z \in \mathbb{T}$, $\gamma'_z \circ \pi_{S, P}= \pi_{S, P} \circ \gamma_z$.
 Then $\pi_{S, P}$ is faithful.
\end{thm}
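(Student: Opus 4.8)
The plan is to follow the usual route to a gauge-invariant uniqueness theorem: average over $\mathbb{T}$ to reduce the problem to the fixed-point algebra, and then invoke Lemma~\ref{new} to deal with that algebra --- which is the only place where the labeled structure, and the new hypothesis on $\mathcal{B}$, genuinely intervene. Since $\gamma$ is strongly continuous, $\Phi(x) := \int_{\mathbb{T}} \gamma_z(x)\,dz$ is a faithful conditional expectation of $C^*(E,\mathcal{L},\mathcal{B})$ onto the fixed-point algebra $\mathcal{F} := C^*(E,\mathcal{L},\mathcal{B})^{\gamma}$, and $\Phi'(y) := \int_{\mathbb{T}} \gamma'_z(y)\,dz$ is a faithful conditional expectation of $C^*(\{S_a,P_A\})$ onto its fixed-point algebra. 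The intertwining hypothesis integrates (using boundedness of $\pi_{S,P}$) to $\Phi' \circ \pi_{S,P} = \pi_{S,P} \circ \Phi$. Hence, if $\pi_{S,P}(a) = 0$, then $\pi_{S,P}(\Phi(a^*a)) = \Phi'(\pi_{S,P}(a^*a)) = 0$; so once we know $\pi_{S,P}$ is injective on $\mathcal{F}$ we obtain $\Phi(a^*a) = 0$, and faithfulness of $\Phi$ forces $a^*a = 0$, i.e. $a = 0$. It therefore suffices to prove that $\pi_{S,P}|_{\mathcal{F}}$ is injective.

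To do this I would present $\mathcal{F}$ as a directed union of finite-dimensional subalgebras. Since $C^*(E,\mathcal{L},\mathcal{B}) = \clsp\{ s_\alpha p_A s_\beta^* \}$ (by \cite{bp}) and $\gamma_z(s_\alpha p_A s_\beta^*) = z^{|\alpha|-|\beta|}\, s_\alpha p_A s_\beta^*$, we have $\mathcal{F} = \clsp\{ s_\alpha p_A s_\beta^* : |\alpha| = |\beta| \}$. A product or adjoint of elements of this form is again a finite sum of such elements, with word-lengths no longer than those of the factors and with $p$-parts obtained from the original sets by finitely many relative-range and intersection operations; set-finiteness of $(E,\mathcal{L},\mathcal{B})$ then guarantees that the set $Y$ generated by any finite family of such elements under multiplication and taking adjoints is finite, so $C^*(Y) = \on{span} Y$ is finite-dimensional. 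Thus $\mathcal{F} = \overline{\bigcup_Y C^*(Y)}$ with the union directed over such $Y$, and it is enough to show that $\pi_{S,P}$ is injective on each $C^*(Y)$. As $C^*(Y)$ is a finite direct sum of matrix algebras, $\pi_{S,P}$ maps it injectively if and only if $\pi_{S,P}(q) \neq 0$ for every minimal projection $q$ of $C^*(Y)$.

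This last point is exactly what Lemma~\ref{new} supplies. If $q$ falls under case (i), then $q = s_{\alpha_i} p_{A_i} s_{\alpha_i}^*$ with $A_i \neq \emptyset$ (else $q = 0$), so $\pi_{S,P}(q) = S_{\alpha_i} P_{A_i} S_{\alpha_i}^*$, which a short computation with the defining relations shows to be nonzero since $P_{A_i} \neq 0$ by hypothesis. If $q$ falls under case (ii), Lemma~\ref{new} produces a nonzero $r = s_{\alpha_i \beta} p_X s_{\alpha_i \beta}^*$ with $X \neq \emptyset$ and $q \ge r$; the same computation gives $\pi_{S,P}(r) = S_{\alpha_i \beta} P_X S_{\alpha_i \beta}^* \ne 0$ since $P_X \ne 0$, whence $\pi_{S,P}(q) \ge \pi_{S,P}(r) \ne 0$ by positivity of $\pi_{S,P}$. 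Either way $\pi_{S,P}$ annihilates no minimal projection of any $C^*(Y)$, so it is injective on $\mathcal{F}$, and the proof is complete.

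Everything outside the minimal-projection step is the standard conditional-expectation machinery, and I expect the real obstacle to be that step --- which is precisely where the gap in \cite{bp} occurred. One must be able to split off from an arbitrary minimal projection of $C^*(Y)$ a nonzero ``diagonal'' subprojection $s_\mu p_C s_\mu^*$ with $C \ne \emptyset$ (so that it survives $\pi_{S,P}$ via $P_C \ne 0$), yet under the original hypotheses a nonempty $C$ could still carry the zero projection, because one could have $p_A = p_B$ for $A \supsetneq B$. Requiring $\mathcal{B}$ to be closed under relative complements is exactly what repairs this, making Lemma~\ref{new}, and hence the argument above, valid.
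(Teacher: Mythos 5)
Your argument is correct and is exactly the proof the paper intends: the paper simply cites \cite[Theorem 5.3]{bp} with Lemma~\ref{new} substituted for \cite[Lemma 5.2]{bp}, and that cited proof is precisely the conditional-expectation reduction to the AF fixed-point algebra followed by the minimal-projection analysis you describe. Your closing remarks also correctly identify where the relative-complement hypothesis enters, namely in guaranteeing the nonzero subprojection $s_{\alpha_i\beta}p_{X}s_{\alpha_i\beta}^*$ in case (ii) of Lemma~\ref{new}.
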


\begin{proof}
The proof is the same as given in \cite[Theorem 5.3]{bp}, using Lemma~\ref{new} instead of \cite[Lemma 5.2]{bp}.
\end{proof}

\begin{defi}
Let $( E , \mathcal{L} )$ be a weakly left-resolving, set-finite labeled graph, then we define  $\mathcal{E} ( r , \mathcal{L})$ to be the smallest accommodating collection
of subsets of $E^0$ which is closed under relative complements.
\end{defi}

\begin{rem} \label{rem:erl}
Every $A \in \mathcal{E}^{0,-}$ can be written as $A = \cup_{j=1}^n A_j$ where $A_j =  \cap_{i=1}^{m(j)} r ( \beta_{i}^j )$ and $\beta_{i}^{j} \in \mathcal{L}^+ (E)$ for all $i,j$. Hence, by applications of de Morgan's laws we may show that every $A \in \mathcal{E} ( r , \mathcal{L} )$ can be written in the form $A = \cup_{j=1}^n A_j$ where $A_j =  \cap_{i=1}^{m(j)} r ( \alpha_i^j ) \backslash r ( \beta_{i}^j )$ where $r ( \alpha_i^j ) \supsetneq r ( \beta_i^j)$ and $\alpha_i^j , \beta_{i}^{j} \in \mathcal{L}^+ (E)$ for all $i,j$
\end{rem}

\noindent
This Remark motivates the following definition.

\begin{defi} \label{CKfam}
Let $(E, \mathcal{L})$ be a weakly left-resolving, set-finite labeled graph.  A
\emph{Cuntz-Krieger $(E, \mathcal{L})$-family} consists of  commuting
projections $\{p_{r(\beta)}: \beta \in \mathcal{L}^+(E) \}$ and
partial isometries $\{s_a: a \in \mathcal{A} \}$ with the properties
that:
\begin{enumerate} \renewcommand{\theenumi}{\roman{enumi}}
 \item[(CK1a)] \label{ck1a}  For all $\beta, \omega \in \mathcal{L}^+(E)$, $p_{r(\beta)} p_{r(\omega)} =0$ if and only if $r(\beta) \cap r(\omega) = \emptyset$.
  \item[(CK1b)] \label{ck1b}  For all $\beta, \omega, \kappa \in \mathcal{L}^+(E)$, if $r(\beta) \cap r(\omega) = r(\kappa)$, then
  $p_{r(\beta)}p_{r(\omega)} = p_{r(\kappa)}$, if $r(\beta) \cup r(
  \omega) = r(\kappa)$, then $p_{r(\beta)} + p_{r(\omega)} - p_{r(\beta)}p_{r(\omega)} = p_{r(\kappa)}$ and if $r ( \beta ) \supsetneq r ( \omega )$, then $p_{r(\beta)} - p_{r(\omega)} \neq 0$.
  \item[(CK2)] \label{ck2} If $a \in \mathcal{A}$ and $\beta \in \mathcal{L}^+(E)$, then $p_{r(\beta)} s_a= s_a p_{r(\beta a)}$.
  \item[(CK3)] \label{ck3} If $a,b \in \mathcal{A}$, then $s_a^* s_a= p_{r(a)}$ and $s_a^* s_b=0$ unless $a=b$
  \item[(CK4)] \label{ck4} For $\beta \in \mathcal{L}^+(E)$, if $\mathcal{L}^1_{r(\beta)}$ is finite and non-empty, then we have
    \begin{equation} \label{eq:relation}
    p_{r(\beta)}=\sum_{a \in \mathcal{L}^1_{r(\beta)}}{s_a p_{r(\beta a)}s^*_a}.
    \end{equation}
\end{enumerate}
\end{defi}

\noindent Let $C^* ( E , \mathcal{L}  )$ be the
universal $C^*$-algebra generated by a  Cuntz-Krieger $(E , \mathcal{L} )$-family.

Let $\gamma' : \mathbb{T} \to \operatorname{Aut} C^* ( E , \mathcal{L} )$ be the gauge action determined by
\[
\gamma'_z p_{r ( \beta )} = p_{r (\beta)} , \gamma'_z s_a = z s_a \text{ for } \beta \in \mathcal{L}^+ (E) ,  a \in
\mathcal{A} .
\]

 \begin{thm} \label{thm:links}
Let $(E, \mathcal{L})$ be a weakly left-resolving, set-finite labeled graph. Then $C^*
(E, \mathcal{L})$ is isomorphic  to $C^*(E, \mathcal{L}, \mathcal{E}(r,
\mathcal{L}))$; moreover
\[
C^* ( E , \mathcal{L} ) = \clsp \{ s_\alpha p_A s_\beta^* : \alpha , \beta \in \mathcal{L}^+ (E) , A \in  \mathcal{E} ( r, \mathcal{L} ) \}  .
\]
\end{thm}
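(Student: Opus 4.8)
The plan is to establish the isomorphism $C^*(E,\mathcal{L}) \cong C^*(E,\mathcal{L},\mathcal{E}(r,\mathcal{L}))$ by exhibiting natural maps in both directions via the universal properties, and then to deduce the spanning statement from the structure of a representation of the labeled space $(E,\mathcal{L},\mathcal{E}(r,\mathcal{L}))$.

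First I would check that a representation $\{s_a,p_A\}$ of the labeled space $(E,\mathcal{L},\mathcal{E}(r,\mathcal{L}))$ restricts to a Cuntz--Krieger $(E,\mathcal{L})$-family. The partial isometries $\{s_a\}$ are already present; for the projections we take $\{p_{r(\beta)} : \beta \in \mathcal{L}^+(E)\}$, which lie in the representation since each $r(\beta) \in \mathcal{E}(r,\mathcal{L})$. Relations (CK2) and (CK3) are exactly (ii) and (iii) of Definition~\ref{lgdef}. Relation (CK4) follows from (iv) of Definition~\ref{lgdef} applied to $A = r(\beta)$, noting $r(r(\beta),a) = r(\beta a)$. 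For (CK1a) and (CK1b): relation (i) gives $p_{r(\beta)}p_{r(\omega)} = p_{r(\beta)\cap r(\omega)}$, so the intersection and union formulas are immediate; the non-vanishing claims $p_{r(\beta)} \neq 0$ when $r(\beta)\neq\emptyset$ and $p_{r(\beta)} - p_{r(\omega)} \neq 0$ when $r(\beta)\supsetneq r(\omega)$ are where the hypothesis that $\mathcal{E}(r,\mathcal{L})$ is closed under relative complements does its work, exactly as in Lemma~\ref{new}: $r(\beta)\setminus r(\omega) \in \mathcal{E}(r,\mathcal{L})$ is a nonempty proper subset, and by the argument behind the gauge-invariant uniqueness machinery (applied to the universal representation) its associated projection is nonzero, forcing $p_{r(\beta)} - p_{r(\omega)} = p_{r(\beta)\setminus r(\omega)} \neq 0$. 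This yields a homomorphism $\phi : C^*(E,\mathcal{L}) \to C^*(E,\mathcal{L},\mathcal{E}(r,\mathcal{L}))$ by universality of $C^*(E,\mathcal{L})$.

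Conversely I would build a representation of the labeled space inside $C^*(E,\mathcal{L})$. Using Remark~\ref{rem:erl}, every $A \in \mathcal{E}(r,\mathcal{L})$ has the form $A = \bigcup_j \bigcap_i \big(r(\alpha_i^j)\setminus r(\beta_i^j)\big)$, and I would define $p_A$ by the corresponding Boolean combination of the projections $p_{r(\alpha_i^j)}, p_{r(\beta_i^j)}$ (using $p_{r(\alpha)\cap r(\beta)} = p_{r(\alpha)}p_{r(\beta)}$, $p_{r(\alpha)\setminus r(\beta)} = p_{r(\alpha)} - p_{r(\beta)}$ when $r(\alpha)\supsetneq r(\beta)$, and inclusion-exclusion for unions). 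The main obstacle is well-definedness: I must show this assignment does not depend on the chosen representation of $A$ as a Boolean combination, which amounts to checking that the commuting projections $\{p_{r(\beta)}\}$ generate a commutative $C^*$-algebra whose spectrum realizes the relevant finite Boolean algebra faithfully — here (CK1a) and (CK1b) are precisely the relations guaranteeing that $p_{r(\beta)} \mapsto \chi_{r(\beta)}$ extends consistently, so that distinct descriptions of the same set $A$ give the same projection. Once $p_A$ is well-defined for $A \in \mathcal{E}(r,\mathcal{L})$, relations (i)--(iv) of Definition~\ref{lgdef} follow from (CK1)--(CK4) by routine Boolean bookkeeping; universality of $C^*(E,\mathcal{L},\mathcal{E}(r,\mathcal{L}))$ then gives a homomorphism $\psi$ in the reverse direction. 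The compositions $\psi\circ\phi$ and $\phi\circ\psi$ fix the generators, hence are the identity, proving the isomorphism.

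Finally, for the spanning statement: in any labeled space $C^*(E,\mathcal{L},\mathcal{B})$ one shows by the standard argument (using (ii), (iii) to absorb $s_a$'s past $p_A$'s, and (iv) to rewrite $p_A$ as a sum $\sum_a s_a p_{r(A,a)} s_a^*$ so that products $s_\alpha p_A s_\beta^* \cdot s_\mu p_B s_\nu^*$ of such elements are finite sums of the same type) that $C^*(E,\mathcal{L},\mathcal{B})$ is the closed span of $\{s_\alpha p_A s_\beta^* : \alpha,\beta\in\mathcal{L}^+(E), A\in\mathcal{B}\}$; this is essentially the content of \cite[Lemma 4.4]{bp} invoked in Lemma~\ref{new}. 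Transporting this through the isomorphism with $\mathcal{B} = \mathcal{E}(r,\mathcal{L})$ gives the claimed description of $C^*(E,\mathcal{L})$.
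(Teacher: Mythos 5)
Your reverse-direction map $\psi$ is exactly the paper's $\Phi$: define $Q_A$ for $A\in\mathcal{E}(r,\mathcal{L})$ by Boolean combinations of the $p_{r(\beta)}$ via Remark~\ref{rem:erl} and inclusion--exclusion, check that the resulting family represents the labeled space, and obtain the spanning statement from \cite[Lemma 4.4]{bp}. Where you diverge is in how injectivity is obtained. The paper applies the gauge-invariant uniqueness theorem (Theorem~\ref{GIUT}) directly to $\Phi$: the hypothesis $Q_A\neq 0$ for $\emptyset\neq A$ holds in $C^*(E,\mathcal{L})$ because non-vanishing is an \emph{axiom} on that side ((CK1a) together with the clause $p_{r(\beta)}-p_{r(\omega)}\neq 0$ in (CK1b)), and the gauge actions intertwine.

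You instead try to build an inverse $\phi:C^*(E,\mathcal{L})\to C^*(E,\mathcal{L},\mathcal{E}(r,\mathcal{L}))$ from the universal property of $C^*(E,\mathcal{L})$, which forces you to verify (CK1a) and (CK1b) --- including their non-vanishing clauses --- for the family $\{t_a,q_{r(\beta)}\}$ in the universal labeled-space algebra. That is, you must know a priori that $q_C\neq 0$ for every nonempty $C\in\mathcal{E}(r,\mathcal{L})$, equivalently that $q_{r(\beta)}\neq q_{r(\omega)}$ whenever $r(\beta)\supsetneq r(\omega)$. Your justification, ``by the argument behind the gauge-invariant uniqueness machinery applied to the universal representation,'' does not deliver this: Theorem~\ref{GIUT} \emph{assumes} the projections are nonzero in a given representation and concludes faithfulness; it cannot show that the universal projections are nonzero. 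Establishing that would require exhibiting a concrete representation of $(E,\mathcal{L},\mathcal{E}(r,\mathcal{L}))$ in which all these projections and differences are nonzero, and this is precisely the delicate point behind the erratum discussed in Remark~\ref{rem:giut} (one can have $p_A=p_B$ with $A\supsetneq B$ in a labeled-space algebra without the relative-complement hypothesis and the accompanying non-degeneracy). Indeed, the cleanest proof that $q_C\neq 0$ is as a \emph{consequence} of the isomorphism you are trying to establish, so your route is circular as written. The repair is the paper's route: construct only $\Phi$, note surjectivity on generators, and get injectivity from Theorem~\ref{GIUT}, where all the non-vanishing you need lives on the Cuntz--Krieger side. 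Your observation about well-definedness of $Q_A$ across different Boolean decompositions of $A$ is a genuine point that the paper passes over quickly, and your commutative-algebra argument for it is sound.
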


\begin{proof}
Let $\{s_a, p_{r(\beta)} \}$ be a universal Cuntz-Krieger $(E,
\mathcal{L})$-family and $\{t_a, q_A\}$ be a universal representation of the
labeled space $(E, \mathcal{L}, \mathcal{E}(r, \mathcal{L}))$.
For $a \in \mathcal{A}$, set $T_a= s_a$.  

By (CK1a) we may define $Q_\emptyset=0$. For $\alpha , \beta \in \mathcal{L}^+ (E)$ we may define $Q_{r ( \alpha ) \cap r ( \beta )} = Q_{r(\alpha)}Q_{r (\beta)}$
and $Q_{r (\alpha \cup r  ( \beta )} = Q_{r (\alpha )} + Q_{r (\beta )} - Q_{r (\alpha ) \cap r (\beta )}$ in $C^* ( E , \mathcal{L})$. If $r ( \alpha ) \supsetneq r ( \beta )$ then we may define $Q_{r ( \alpha ) \backslash r ( \beta )} = Q_{r ( \alpha )} - Q_{r (\beta)} \neq 0$ in $C^* ( E , \mathcal{L})$.By Remark~\ref{rem:erl} and using the inclusion/exclusion law we may define $Q_A$ in $C^* ( E , \mathcal{L})$ for all $A \in \mathcal{E} (r, \mathcal{L})$.

It is a routine calculation to show that $\{T_a, Q_A \}$ is a representation of the labeled space $(E, \mathcal{L}, \mathcal{E}(r, \mathcal{L}))$ in $C^*(E, \mathcal{L})$. By the universal property of $C^*(E, \mathcal{L}, \mathcal{E}(r, \mathcal{L}))$ there exists a homomorphism $\Phi: C^*(E, \mathcal{L}, \mathcal{E}(r, \mathcal{L})) \to C^*(E,
\mathcal{L})$ such that $\Phi(t_a)=T_a \quad \text{and} \quad \Phi(q_A)= Q_A$.
It is straightforward to see that $\gamma'_z \circ \Phi = \Phi \circ \gamma_z$ for $z \in \mathbb{T}$. The first statement then follows by Theorem~\ref{GIUT}, and the final statement follows by applying $\Phi$ to an arbitrary element of $C^*(E, \mathcal{L}, \mathcal{E}(r, \mathcal{L}))$ (see \cite[Lemma 4.4]{bp}).
\end{proof}

\section{Automorphisms of Labeled graphs and their $C^*$-algebras} \label{autolgcalg}

\noindent
We begin by defining what a labeled graph morphism is and use the definition to define a labeled graph automorphism.
Then in Theorem \ref{thm:induceact} we show that a labeled graph automorphism of
$( E , \mathcal{L} )$ induces an automorphism of $C^* ( E , \mathcal{L} )$.

\begin{defi} \label{lgmorph}
Let $(E, \mathcal{L})$ and $(F, \mathcal{M})$ be labeled graphs over alphabets
$\mathcal{A}_E$ and $\mathcal{A}_F$ respectively.  
A \emph{labeled graph morphism} is a triple $\phi := (\phi^0, \phi^1,
\phi^{\mathcal{A}_E}): (E, \mathcal{L}) \to (F, \mathcal{M})$ such that
\begin{enumerate}[(a)]
  \item \label{graphmorph} For all $e \in E^1$ we have $\phi^0 (r(e))= r(\phi^1(e))$ and $\phi^0 (s(e))= s(\phi^1(e))$;
  \item \label{lgmorph2} $\phi^{\mathcal{A}_E}: \mathcal{A}_E \to \mathcal{A}_F$
is a map such that 
$\mathcal{M} \circ \phi^1 = \phi^{\mathcal{A}_E} \circ \mathcal{L}$.
\end{enumerate}

\noindent
If the maps $\phi^0, \phi^1 ,\phi^{\mathcal{A}_E}$ are bijective, then the
triple 
$\phi := (\phi^0, \phi^1, \phi^{\mathcal{A}_E})$ is called a \emph{labeled graph
isomorphism}. In the case that $F=E$, $\mathcal{A}_E= \mathcal{A}_F$ and
$\mathcal{L}=\mathcal{M}$ we call  $( \phi^0 , \phi^1 , \phi^\mathcal{A} )$ a
\emph{labeled graph automorphism}.
\end{defi}

\noindent
For a labeled graph morphism $\phi=(\phi^0, \phi^1, \phi^{\mathcal{A}_E})$
we shall omit the superscripts on $\phi$ when the context in which it is being used is clear. 

The set $ \on{Aut}(E, \mathcal{L})  := \{ \phi : \phi \text{ is a labeled
graph automorphism of }  (E,\mathcal{L} ) \}$ 
forms a group under composition. The following result follows easily from the universal definition of $C^* (E , \mathcal{L} )$.

\begin{thm} \label{thm:induceact}
Let $\phi$ be an automorphism of a weakly left-resolving, set-finite labeled graph 
$(E, \mathcal{L})$ and $\{s_a , p_{r(\beta)} \}$ be a universal Cuntz-Krieger
$(E, \mathcal{L})$-family. The maps $s_a \mapsto s_{\phi(a)}$ and
$p_{r(\beta)} \mapsto p_{\phi (r(\beta))}$ induce an automorphism of $C^*
(E, \mathcal{L})$.
\end{thm}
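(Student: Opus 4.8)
The plan is to use the universal property of $C^*(E,\mathcal{L})$: it suffices to exhibit a Cuntz--Krieger $(E,\mathcal{L})$-family $\{S_a, P_{r(\beta)}\}$ inside $C^*(E,\mathcal{L})$ defined by $S_a := s_{\phi(a)}$ and $P_{r(\beta)} := p_{\phi(r(\beta))} = p_{r(\phi(\beta))}$, where by $\phi(\beta)$ I mean the image of a representative path under $\phi^1$, followed by $\mathcal{L}$; one checks via Definition~\ref{lgmorph}\eqref{lgmorph2} that this is well defined, i.e.\ independent of the chosen representative, since $\phi^1$ carries a representative of $\beta$ to a representative of the labeled path $\phi^\mathcal{A}(\beta)$, and $\phi^0(r(\lambda)) = r(\phi^1(\lambda))$ shows the vertex set $r(\beta)$ is carried bijectively onto $r(\phi^\mathcal{A}(\beta))$. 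Since $\phi$ is an automorphism, $\phi^0$ and $\phi^\mathcal{A}$ are bijections, so the assignment $r(\beta)\mapsto\phi^0(r(\beta))$ is a bijection of $\mathcal{E}(r,\mathcal{L})$ respecting intersections, unions and relative complements; in particular it sends nonempty sets to nonempty sets.

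Next I would verify relations (CK1a)--(CK4) for $\{S_a,P_{r(\beta)}\}$. Relations (CK1a), (CK1b) follow because $\phi^0$ is a bijection of $E^0$ intertwining the set operations: $r(\phi^\mathcal{A}(\beta))\cap r(\phi^\mathcal{A}(\omega)) = \phi^0(r(\beta)\cap r(\omega))$, which is empty precisely when $r(\beta)\cap r(\omega)$ is, and similarly for the union and proper-containment clauses, using that the original family $\{s_a,p_{r(\beta)}\}$ satisfies these. For (CK2), one uses $\phi^0(r(\beta a)) = r(\phi^\mathcal{A}(\beta)\phi^\mathcal{A}(a)) = r(\phi^\mathcal{A}(\beta a))$ together with the original (CK2); relation (CK3) follows since $a=b$ iff $\phi^\mathcal{A}(a) = \phi^\mathcal{A}(b)$, and $r(\phi^\mathcal{A}(a)) = \phi^0(r(a))$. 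For (CK4), the key point is that $\mathcal{L}^1_{\phi^0(r(\beta))} = \phi^\mathcal{A}(\mathcal{L}^1_{r(\beta)})$ — since $A\cap s(\gamma)\neq\emptyset$ iff $\phi^0(A)\cap s(\phi^\mathcal{A}(\gamma))\neq\emptyset$ — so the sum in \eqref{eq:relation} for $P_{r(\beta)}$ is just the image under $\phi$ of the sum for $p_{r(\beta)}$, and the finiteness hypothesis is preserved because $\phi^\mathcal{A}$ is a bijection (here set-finiteness of $(E,\mathcal{L})$ is used).

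By the universal property of $C^*(E,\mathcal{L})$ there is then a homomorphism $\Phi_\phi: C^*(E,\mathcal{L})\to C^*(E,\mathcal{L})$ with $\Phi_\phi(s_a) = s_{\phi(a)}$ and $\Phi_\phi(p_{r(\beta)}) = p_{r(\phi(\beta))}$. Applying the same construction to $\phi^{-1}$ yields $\Phi_{\phi^{-1}}$, and on generators $\Phi_{\phi^{-1}}\circ\Phi_\phi = \on{id} = \Phi_\phi\circ\Phi_{\phi^{-1}}$, so $\Phi_\phi$ is an automorphism. (More conceptually, $\phi\mapsto\Phi_\phi$ is a homomorphism $\on{Aut}(E,\mathcal{L})\to\on{Aut}C^*(E,\mathcal{L})$.) I expect the main obstacle to be purely bookkeeping: checking that $P_{r(\beta)}$ is well defined — that $\phi^0(r(\beta))$ depends only on the labeled path $\beta$ and equals $r(\phi^\mathcal{A}(\beta))$ inside $\mathcal{E}(r,\mathcal{L})$ — and that the index set $\mathcal{L}^1_{r(\beta)}$ transforms correctly so that (CK4) holds, since this is where the interaction between the graph-level map $\phi^1$, the vertex map $\phi^0$ and the alphabet map $\phi^\mathcal{A}$ must be reconciled; none of it is deep, but the labeled-graph setting requires care because distinct edges may share a label.
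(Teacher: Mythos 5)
Your proposal is correct and takes exactly the route the paper intends: the paper gives no written proof, merely asserting that the result "follows easily from the universal definition of $C^*(E,\mathcal{L})$", and your argument is the standard filling-in of that assertion (check that $\{s_{\phi(a)}, p_{\phi(r(\beta))}\}$ is a Cuntz--Krieger family, invoke universality, and invert using $\phi^{-1}$). The points you flag as needing care — that $\phi^0(r(\beta)) = r(\phi^{\mathcal{A}}(\beta))$ and that $\mathcal{L}^1_{\phi^0(r(\beta))} = \phi^{\mathcal{A}}(\mathcal{L}^1_{r(\beta)})$ — are indeed the only nontrivial verifications, and your treatment of them is sound.
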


\section{Skew product labeled graphs and group actions} \label{sec:group_actions}

\noindent
 In this section we shall define a skew product labeled graph and define what it means for a group to act on a labeled graph.

\begin{defi} \label{splg}
Let $(E, \mathcal{L})$ be a labeled graph and let $c,d: E^1 \rightarrow G$ be functions.
The \emph{skew product labeled graph} $(E \times_c G, \mathcal{L}_d)$ over alphabet
$\mathcal{A} \times G$ consists of the skew product graph $( E^0 \times G , E^1
\times G , r_c , s_c )$ where
\[
r_c (e,g) = (r(e), gc(e)) \quad s_c (e,g) = (s(e), g)
\]

\noindent together with the labeling
$\mathcal{L}_d: ( E \times_c G )^1 \to \mathcal{A} \times G$ given by
$\mathcal{L}_d (e,g) := (\mathcal{L} (e), gd(e))$.
\end{defi}

\noindent
Since the labels received by $(v,g) \in (E \times_c G)^0$ are in one-to-one
correspondence with the labels received by $v \in E^0$ it follows that
 if $(E,{\mathcal L})$ is left-resolving, then so is $(E \times_c G,{\mathcal
L}_d)$.

\begin{exam} \label{spexs}
For the labeled graph $(E,\mathcal {L})$ of Examples~\ref{trivex}(\ref{fishy})
let $c,d:  E^1 \to {\mathbb Z}$ be given by
$c(e) = 1$ and $d(e) = 0$ for all $e \in E^1$. Then
\[
\begin{tikzpicture}
    \def\vertex(#1) at (#2,#3){
        \node[inner sep=0pt, circle, fill=black] (#1) at (#2,#3)
        [draw] {.};   }
        \node at (-3,0){$(E \times_c \mathbb{Z} , \mathcal{L}_d ) :=$};
    \vertex(11) at (0, 1)
    \node[inner sep=3pt, anchor = south] at (11.north)
    {$\scriptstyle (v,0)$};
    \vertex(12) at (0, -1)
    \node[inner sep=3pt, anchor = north] at (12.south)
    {$\scriptstyle (w,0)$};
    \vertex(21) at (2, 1)
    \node[inner sep=3pt, anchor = south] at (21.north)
    {$\scriptstyle (v,1)$};
    \vertex(22) at (2, -1)
    \node[inner sep=3pt, anchor = north] at (22.south)
    {$\scriptstyle (w,1)$};
    \vertex(31) at (4, 1)
    \node[inner sep=3pt, anchor = south] at (31.north)
    {$\scriptstyle (v,2)$};
    \vertex(32) at (4, -1)
    \node[inner sep=3pt, anchor = north] at (32.south)
    {$\scriptstyle (w,2)$};
    \vertex(41) at (6, 1)
    \node[inner sep=3pt, anchor = south] at (41.north)
    {$\scriptstyle (v,3)$};
    \vertex(42) at (6, -1)
    \node[inner sep=3pt, anchor = north] at (42.south)
    {$\scriptstyle (w,3)$};
    \node at (7, 1) {$\dots$};
    \node at (7, -1) {$\dots$};
    \node at (-1, 1) {$\dots$};
    \node at (-1, -1) {$\dots$};

    \draw[style=semithick, -latex] (11.east)--(21.west) node[pos=0.5, anchor=south, inner sep=1pt]{$\scriptstyle (1,0)$};
    \draw[style=semithick, -latex] (11.south east)--(22.north west)node[pos=0.2, circle, anchor=north east, inner sep=0pt]{$\scriptstyle (0,0)$};
    \draw[style=semithick, -latex] (12.north east)--(21.south east) node[pos=0.2, circle, anchor=south east, inner sep=0pt]{$\scriptstyle (0,0)$};

    \draw[style=semithick, -latex] (21.east)--(31.west) node[pos=0.5, anchor=south, inner sep=1pt]{$\scriptstyle (1,1)$};
    \draw[style=semithick, -latex] (21.south east)--(32.north west) node[pos=0.2, circle, anchor=north east, inner sep=0pt]{$\scriptstyle (0,1)$};
    \draw[style=semithick, -latex] (22.north east)--(31.south east) node[pos=0.2, circle, anchor=south east, inner sep=0pt]{$\scriptstyle (0,1)$};

    \draw[style=semithick, -latex] (31.east)--(41.west) node[pos=0.5, anchor=south, inner sep=1pt]{$\scriptstyle (1,2)$};
    \draw[style=semithick, -latex] (31.south east)--(42.north west) node[pos=0.2, circle, anchor=north east, inner sep=0pt]{$\scriptstyle (0,2)$};
    \draw[style=semithick, -latex] (32.north east)--(41.south east) node[pos=0.2, circle, anchor=south east, inner sep=0pt]{$\scriptstyle (0,2)$};

\end{tikzpicture}
\]
\end{exam}

\begin{rem} \label{sppaths}
We shall use the following simpler description of the path space of $E \times_c
G$. For $v \in E^0 , e \in E^1 , g \in G$ set $v_g = (v,g)$, $e_g = (e,g)$. Then
for $\mu \in E^n$ where $n \ge 2$ and $g \in G$ set
\[
\mu_g =  ( \mu_1, g )( \mu_2, g c( \mu_1) ) \cdots ( \mu_n, g c(\mu') ) \in ( E
\times G )^n .
\]

\noindent
For $\mu \in E^*$ the map $( \mu , g) \mapsto \mu_g$ identifies $E^* \times G$
with $( E \times_c G )^*$. Then for $( \mu , g ) \in E^* \times G$ we have
\begin{equation} \label{noters}
s(\mu, g)= (s(\mu), g ) \text{ and } r(\mu, g)= (r(\mu), g c(\mu)).
\end{equation}
\end{rem}

\noindent
Let $(E, \mathcal{L})$ be a labeled graph over the alphabet $\mathcal{A}$.
A \emph{labeled graph action of $G$ on $(E, \mathcal{L})$} is a triple $((E,
\mathcal{L}), G, \phi)$ where
$\phi : G \to \on{Aut}(E, \mathcal{L})$ is a group homomorphism.
In particular, 
for all $e \in E^1$ and $g \in G$ we have
\begin{equation} \label{compatible}
\mathcal{L}(\phi_g(e))=\phi_g(\mathcal{L}(e)).
\end{equation}

If we ignore the label maps, a labeled graph action $((E, \mathcal{L}), G,
\phi)$ restricts to a graph action of $G$ on $E$; we denote this restricted
action by $(E,G,\phi)$. The  labeled graph action $((E, \mathcal{L}), G,
\alpha)$ is \emph{free} if  $\phi_g(v)=v$ for some $v \in E^0$, then $g=1_G$ and if $\phi_g (a)=a$ some $a \in \mathcal{A}$,  then $g=1_G$.

The following lemma shows that skew product labeled graphs
provide a rich source of examples of free labeled graph actions. As the proof is routine, we omit it.

\begin{lem} \label{flga}
Let $(E, \mathcal{L})$ be a labeled graph, $c,d: E^1 \to G$ be functions and $(E \times_c G, \mathcal{L}_d)$
be the associated skew product labeled graph. Then
\begin{enumerate}[(i)]
  \item \label{flga1} For $(x,h) \in (E \times_c G)^i$, $(a,h) \in \mathcal{A} \times G$,  $g \in G$ and $i=0,1$ let $\tau^i_g(x,h)=(x,gh)$ and $\tau^{\mathcal{A}}_g (a,h)= (a,gh)$.  Then $\tau_g= (\tau_g^0, \tau_g^1, \tau_g^{\mathcal{A}})$ is a labeled graph automorphism.
    \item \label{flga3} The map $\tau= (\tau^0, \tau^1, \tau^{\mathcal{A}}): G \to
\on{Aut}(E \times_c G, \mathcal{L}_d)$ defined by  $g \mapsto \tau_g$ is a homomorphism.
 \item \label{flga4} The triple $((E \times_c G, \mathcal{L}_d), G, \tau)$ is a free labeled graph action.
\end{enumerate}
\end{lem}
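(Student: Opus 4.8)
The plan is to check, item by item, the conditions of Definition~\ref{lgmorph} together with the homomorphism and freeness requirements; each reduces to a one-line manipulation of the formulas for $r_c$, $s_c$ and $\mathcal{L}_d$ given in Definition~\ref{splg}.

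For~(i) I would first note that each of $\tau_g^0$, $\tau_g^1$ and $\tau_g^{\mathcal{A}}$ is a bijection, with inverse the corresponding map for $g^{-1}$, since left multiplication by $g$ in the $G$-coordinate is invertible. To see that $\tau_g=(\tau_g^0,\tau_g^1,\tau_g^{\mathcal{A}})$ respects the graph structure, take $(e,h)\in(E\times_c G)^1$ and compute $r_c(\tau_g^1(e,h))=r_c(e,gh)=(r(e),ghc(e))=\tau_g^0(r(e),hc(e))=\tau_g^0(r_c(e,h))$, and similarly $s_c(\tau_g^1(e,h))=(s(e),gh)=\tau_g^0(s_c(e,h))$. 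For the labeling condition, $\mathcal{L}_d(\tau_g^1(e,h))=\mathcal{L}_d(e,gh)=(\mathcal{L}(e),ghd(e))=\tau_g^{\mathcal{A}}(\mathcal{L}(e),hd(e))=\tau_g^{\mathcal{A}}(\mathcal{L}_d(e,h))$. Hence $\tau_g\in\on{Aut}(E\times_c G,\mathcal{L}_d)$.

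For~(ii) I would evaluate $\tau_g\circ\tau_{g'}$ coordinatewise: on $(x,h)$ with $x$ a vertex or an edge it gives $(x,h)\mapsto(x,g'h)\mapsto(x,gg'h)=\tau_{gg'}(x,h)$, and the same on $\mathcal{A}\times G$; together with $\tau_{1_G}=\mathrm{id}$ this shows $g\mapsto\tau_g$ is a group homomorphism. For~(iii), parts~(i) and~(ii) already say that $((E\times_c G,\mathcal{L}_d),G,\tau)$ is a labeled graph action (the compatibility~\eqref{compatible} being a special case of the labeling computation in~(i)); for freeness, if $\tau_g^0(v,h)=(v,h)$ then $(v,gh)=(v,h)$, so $gh=h$ and hence $g=1_G$, and the identical argument with $\tau_g^{\mathcal{A}}$ on $\mathcal{A}\times G$ gives $g=1_G$ whenever $\tau_g$ fixes a letter.

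There is no genuine obstacle here — this is precisely the ``routine'' verification the authors suppress. The only point requiring a little care is bookkeeping the side of the $G$-action: since $\tau_g$ is defined by \emph{left} multiplication in the $G$-coordinate, one must carry the factors in the correct order so that composition yields $\tau_g\tau_{g'}=\tau_{gg'}$ (and not $\tau_{g'g}$), which is what makes $\tau$ a genuine homomorphism rather than an anti-homomorphism.
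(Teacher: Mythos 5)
Your verification is correct and is exactly the routine check the authors omit: each of the three parts reduces to the coordinate computations you give, and the key identities $r_c(e,gh)=\tau_g^0(r_c(e,h))$, $s_c(e,gh)=\tau_g^0(s_c(e,h))$ and $\mathcal{L}_d(e,gh)=\tau_g^{\mathcal{A}}(\mathcal{L}_d(e,h))$ are all valid because left translation in the $G$-coordinate commutes with right multiplication by $c(e)$ and $d(e)$. Your closing remark about left versus right multiplication correctly identifies the one place where care is needed, and your freeness argument matches the paper's definition (fixing a vertex or a letter forces $g=1_G$).
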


\begin{defi} \label{flgadef}
The map $\tau=(\tau^0, \tau^1, \tau^{\mathcal{A}}): G \to\operatorname{Aut}(E \times_c G, \mathcal{L}_d)$
as given in Lemma~\ref{flga}~(\ref{flga3}) is called the \emph{left labeled graph translation map},
and the action $((E \times_c G, \mathcal{L}_d), G, \tau)$ the \emph{left labeled graph translation action}.
\end{defi}

\noindent
Two labeled graph actions $((E, \mathcal{L}), G, \phi)$ and $((F,
\mathcal{M}), G, \psi)$ are
\emph{isomorphic} if there is a labeled graph isomorphism $\varphi: (E,
\mathcal{L}) \to (F, \mathcal{M})$ 
which is
\emph{equivariant} in the sense that $ \varphi \circ \phi_g = \psi_g
\circ \varphi$ for all $g \in G$. 

\begin{thm} \label{induceact2}
Let $(E, \mathcal{L})$ be a weakly left-resolving, set-finite labeled graph, and $((E, \mathcal{L}), G, \alpha)$ be a labeled graph action.
Let  $\{s_a ,p_{r(\beta)} \}$ be a universal Cuntz-Krieger $(E, \mathcal{L})$-family.  Then for $h \in G$ the maps
\begin{equation*}
\alpha_h s_a = s_{\alpha_h a} \text{ and } \alpha_h
p_{r(\beta)} = p_{\alpha_h r(\beta)}
\end{equation*}

\noindent
determine an an action of $G$ on $C^*(E, \mathcal{L})$.
If  $((E, \mathcal{L}), G, \phi)$ and $((F, \mathcal{M}), G, \psi)$ are
isomorphic then $C^* ( E , \mathcal{L} ) \times_\phi G \cong C^* ( F , \mathcal{M} ) \times_\psi G$.
\end{thm}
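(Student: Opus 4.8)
The plan is to lift the given equivariant labeled graph isomorphism to an equivariant isomorphism of $C^*$-dynamical systems, and then to invoke functoriality of the crossed product. Write $\varphi=(\varphi^0,\varphi^1,\varphi^{\mathcal A})\colon (E,\mathcal L)\to(F,\mathcal M)$ for the labeled graph isomorphism implementing the isomorphism of the two actions, so that $\varphi\circ\phi_g=\psi_g\circ\varphi$ for all $g\in G$, and continue to denote by $\phi$ and $\psi$ the induced $C^*$-actions on $C^*(E,\mathcal L)$ and $C^*(F,\mathcal M)$ produced by the first part of the theorem.

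The first step is the ``isomorphism'' version of Theorem~\ref{thm:induceact}. Since $\varphi^0,\varphi^1,\varphi^{\mathcal A}$ are bijections intertwining range, source and the labeling maps, for $\beta\in\mathcal L^+(E)$ we have $\varphi^{\mathcal A}(\beta)\in\mathcal M^+(F)$ with $\varphi^0(r(\beta))=r(\varphi^{\mathcal A}(\beta))$, and $\varphi^0$ carries finite intersections, unions and relative complements of such sets (hence all of $\mathcal E(r,\mathcal L)$) bijectively onto their $(F,\mathcal M)$-counterparts, while $\mathcal L^1_{r(\beta)}$ is carried bijectively onto $\mathcal M^1_{r(\varphi(\beta))}$; in particular $(F,\mathcal M)$ is again weakly left-resolving and set-finite. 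Hence if $\{s_a,p_{r(\beta)}\}$ and $\{t_b,q_{r(\omega)}\}$ are universal Cuntz--Krieger families for $(E,\mathcal L)$ and $(F,\mathcal M)$ respectively, then $\{t_{\varphi(a)},q_{\varphi(r(\beta))}\}$ is a Cuntz--Krieger $(E,\mathcal L)$-family in $C^*(F,\mathcal M)$ and $\{s_{\varphi^{-1}(b)},p_{\varphi^{-1}(r(\omega))}\}$ is a Cuntz--Krieger $(F,\mathcal M)$-family in $C^*(E,\mathcal L)$. The universal property then yields homomorphisms in both directions which are mutually inverse (checked on generators), giving an isomorphism $\varphi_*\colon C^*(E,\mathcal L)\to C^*(F,\mathcal M)$ with $\varphi_*(s_a)=t_{\varphi(a)}$ and $\varphi_*(p_{r(\beta)})=q_{\varphi(r(\beta))}$.

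Next, equivariance passes to the $C^*$-level: for $g\in G$ and $a\in\mathcal A_E$, using $\varphi(\phi_g a)=\psi_g(\varphi a)$ we get
\[
\varphi_*(\phi_g s_a)=t_{\varphi(\phi_g a)}=t_{\psi_g(\varphi a)}=\psi_g t_{\varphi a}=\psi_g\varphi_*(s_a),
\]
and similarly $\varphi_*(\phi_g p_{r(\beta)})=\psi_g\varphi_*(p_{r(\beta)})$ using $\varphi(\phi_g r(\beta))=\psi_g(\varphi r(\beta))$. As these elements generate $C^*(E,\mathcal L)$, we conclude $\varphi_*\circ\phi_g=\psi_g\circ\varphi_*$ for all $g$, so $\varphi_*$ is an isomorphism of the $C^*$-dynamical systems $(C^*(E,\mathcal L),G,\phi)$ and $(C^*(F,\mathcal M),G,\psi)$. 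Finally one appeals to the standard functoriality of the crossed product: $\varphi_*$ induces a $*$-isomorphism $C_c(G,C^*(E,\mathcal L))\to C_c(G,C^*(F,\mathcal M))$, $\xi\mapsto\varphi_*\circ\xi$, which respects convolution and involution precisely because $\varphi_*$ is equivariant, and which extends to an isomorphism $C^*(E,\mathcal L)\times_\phi G\cong C^*(F,\mathcal M)\times_\psi G$; equivalently, precomposition with $\varphi_*^{-1}$ sets up a bijection between covariant representations of the two systems, so the universal norms agree.

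The only step requiring genuine care is the construction of $\varphi_*$, that is, verifying that a labeled graph isomorphism carries Cuntz--Krieger families to Cuntz--Krieger families: one must track that $\varphi$ respects not merely the sets $r(\beta)$ but the whole lattice of relative ranges, intersections, unions and relative complements underlying (CK1a)--(CK1b), and that the finiteness-and-nonemptiness hypothesis in (CK4) transfers under $\varphi$. Once that bookkeeping is in place, the equivariance computation and the passage to crossed products are purely formal.
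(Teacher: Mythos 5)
Your argument is correct and is essentially the route the paper takes: the paper's entire proof is the sentence ``Follows by a straightforward application of Theorem~\ref{thm:induceact} and the universal property of crossed products,'' and you have simply filled in those details — extending Theorem~\ref{thm:induceact} from automorphisms to isomorphisms, checking that the induced $*$-isomorphism $\varphi_*$ is equivariant on generators, and then applying functoriality of the full crossed product. The only omission is the first assertion of the theorem (that the maps determine an action), but that too is immediate from Theorem~\ref{thm:induceact} together with the fact that $\alpha\colon G\to\operatorname{Aut}(E,\mathcal{L})$ is a group homomorphism, so nothing essential is missing.
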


\begin{proof}
Follows by a straightforward application of Theorem~\ref{thm:induceact} and the universal property of crossed products.
\end{proof}

\section{Gross-Tucker Theorem}

\noindent
In this section we prove a version of the Gross-Tucker theorem  for labeled graphs.
For directed graphs, the Gross-Tucker theorem says, roughly speaking, that up to equivariant isomorphism, every free action $\alpha$
of a group $G$ on a directed graph $E$ is a left translation automorphism $\tau$ on a skew product graph $(E/G) \times_c G$ built from the quotient graph $E/G$.
Our aim is to prove a similar result for labeled graphs. The new
ingredient is the map $d: E^1 \to G$ found in the definition of a skew product labeled graph for labeled graphs. Before giving our main result, Theorem \ref{gttlg}, we
introduce some notation.

\begin{defis}
Let $((E, \mathcal{L}), G, \alpha)$ be a labeled graph action.  For $i= 0,1$ and $x \in E^i$ let $Gx:= \{ \alpha^i_g (x): g \in G \}$ and $(E/G)^i= \{Gx : x \in E^i \}$.  For $a \in \mathcal{A}$ let $Ga= \{ \alpha^{\mathcal{A}}_g(a): g \in G \}$ and $\mathcal{A}/G= \{Ga: a \in \mathcal{A}\}$.
\end{defis}

\noindent
The proof of the following lemma is straightforward, so we omit it.

\begin{lem} \label{labelquo}
Let $((E, \mathcal{L}), G, \alpha)$ be a labeled graph action.  The maps $r,s : (E/G)^1 \to (E/G)^0$ given by
\begin{equation} \label{rsmaps}
r(Ge)=Gr(e)\text{ and } s(Ge)=Gs(e) \text{ for } Ge \in (E/G)^1
\end{equation}
and the map $\mathcal{L}/G: (E/G)^1 \to \mathcal{A}/G$ given by $(\mathcal{L}/G)(Ge)=G \mathcal{L}(e)$ are well-defined. Consequently, $(E/G, \mathcal{L}/G)$ is a labeled graph over the alphabet $\mathcal{A}/G$.

 The map $q=(q^0, q^1, q^{\mathcal{A}}): (E, \mathcal{L}) \to (E/G, \mathcal{L}/G)$ given by $q^i(x)=Gx$ for $i=0,1, x \in E^i$ and $q^{\mathcal{A}}(a)= Ga$ for $a \in \mathcal{A}$ is a surjective labeled graph morphism.
\end{lem}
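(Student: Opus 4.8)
The plan is to reduce every assertion to the single fact that, for each $g\in G$, the component maps $\alpha^0_g,\alpha^1_g,\alpha^{\mathcal A}_g$ assemble into a labeled graph \emph{automorphism} of $(E,\mathcal L)$; concretely, $\alpha_g$ commutes with $r$ and $s$ by Definition~\ref{lgmorph}(a) and intertwines $\mathcal L$ with the alphabet action by \eqref{compatible}. Granting this, each claim in the lemma is a one-line orbit-chase.

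First I would check that $r,s\colon (E/G)^1\to(E/G)^0$ and $\mathcal L/G\colon (E/G)^1\to\mathcal A/G$ are well defined, i.e.\ independent of the chosen representative. Suppose $Ge=Ge'$, so $e'=\alpha^1_g(e)$ for some $g\in G$. Since $\alpha_g$ is a labeled graph automorphism, $r(e')=r(\alpha^1_g(e))=\alpha^0_g(r(e))$, whence $Gr(e')=Gr(e)$, and the identical argument with $s$ in place of $r$ gives $Gs(e')=Gs(e)$. For the label, \eqref{compatible} gives $\mathcal L(e')=\mathcal L(\alpha^1_g(e))=\alpha^{\mathcal A}_g(\mathcal L(e))$, so $G\mathcal L(e')=G\mathcal L(e)$. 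Hence all three maps are well defined. I would also record that the quotient still satisfies the paper's standing hypotheses: $s^{-1}(Gv)$ is the image of the finite nonempty set $s^{-1}(v)$ under $q^1$, hence finite and nonempty, and $r^{-1}(Gv)$ contains the image of $r^{-1}(v)\ne\emptyset$, so $E/G$ is again row-finite and essential. Therefore $(E/G,\mathcal L/G)$ is a labeled graph over $\mathcal A/G$, and $\mathcal L/G$ is surjective because $\mathcal L$ and $q^{\mathcal A}$ are.

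Next I would verify that $q=(q^0,q^1,q^{\mathcal A})$ is a surjective labeled graph morphism. Surjectivity of $q^0,q^1$ and $q^{\mathcal A}$ is immediate, since $(E/G)^i$ and $\mathcal A/G$ are by definition the respective images. For Definition~\ref{lgmorph}(a): for $e\in E^1$, $q^0(r(e))=Gr(e)=r(Ge)=r(q^1(e))$ by the (now legitimate) definition of $r$ on $(E/G)^1$, and identically $q^0(s(e))=s(q^1(e))$. For Definition~\ref{lgmorph}(b): $q^{\mathcal A}\colon\mathcal A\to\mathcal A/G$ is a map, and $(\mathcal L/G)(q^1(e))=(\mathcal L/G)(Ge)=G\mathcal L(e)=q^{\mathcal A}(\mathcal L(e))$ for every $e\in E^1$, that is, $(\mathcal L/G)\circ q^1=q^{\mathcal A}\circ\mathcal L$, which is exactly what is required.

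I do not anticipate a genuine obstacle; the only point needing care is to invoke at each step that $\alpha_g$ is an automorphism of $(E,\mathcal L)$ (hence a labeled graph morphism in both directions), since it is this that makes $\alpha_g$ commute with $r,s$ and respect $\mathcal L$, and — depending on how strictly one reads ``labeled graph'' — to note that the quotient remains row-finite and essential so that it lies in the category used throughout the paper.
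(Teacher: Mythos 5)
Your proof is correct; the paper omits this argument entirely as ``straightforward,'' and what you have written is precisely the routine orbit-chase the authors intended, reducing everything to the fact that each $\alpha_g$ is a labeled graph automorphism and hence commutes with $r,s$ and intertwines $\mathcal L$ via \eqref{compatible}. Your extra check that $E/G$ remains row-finite and essential is a worthwhile addition, since the paper's standing hypotheses require it.
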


\begin{defi}
Let $((E, \mathcal{L}), G, \alpha)$ be a labeled graph action.   The
\emph{quotient labeled graph} 
$\left(E/G, \mathcal{L}/G \right)$ is the labeled graph described in
Lemma~\ref{labelquo}, the map $q: (E,
\mathcal{L}) \to (E/G, \mathcal{L}/G)$ is the \emph{quotient labeled map}.
\end{defi}

\noindent The following Proposition is an analog of \cite[Theorem 2.2.1]{gt}
whose proof is routine, and so we omit it.

\begin{prop} \label{skewquo}
Let $(E, \mathcal{L})$ be a labeled graph, $c,d: E^1 \to G$ be functions and 
$(E \times_c G, \mathcal{L}_d)$ be the associated skew product labeled graph. 
Let $((E \times_c G, \mathcal{L}_d), G, \tau)$ be the left labeled graph
translation action.  Then 
\[
((E \times_c G) / G, \mathcal{L}_d /G) \cong (E, \mathcal{L}) .
\]
\end{prop}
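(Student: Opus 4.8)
The plan is to exhibit an explicit equivariant labeled graph isomorphism $\varphi = (\varphi^0, \varphi^1, \varphi^{\mathcal{A} \times G})$ from $(E \times_c G, \mathcal{L}_d)$ quotiented by the translation action onto $(E, \mathcal{L})$, and then to check that it intertwines the relevant structure. First I would pin down what the quotient $((E \times_c G)/G, \mathcal{L}_d/G)$ actually looks like, using Lemma~\ref{labelquo}: its vertices are the $G$-orbits $G(v,g)$ of vertices of $E \times_c G$ under $\tau^0$, its edges are the orbits $G(e,g)$ under $\tau^1$, and its alphabet consists of the orbits $G(a,g)$ under $\tau^{\mathcal{A}}$. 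Since $\tau^0_h(v,g) = (v,hg)$, the orbit $G(v,g) = \{(v,hg) : h \in G\} = \{v\} \times G$ depends only on $v \in E^0$; similarly $G(e,g)$ depends only on $e \in E^1$ and $G(a,g)$ depends only on $a \in \mathcal{A}$. So I would define $\varphi^0(G(v,g)) = v$, $\varphi^1(G(e,g)) = e$, and $\varphi^{\mathcal{A} \times G}(G(a,g)) = a$; these are well-defined bijections precisely because the left translation action is free on each of $(E \times_c G)^0$, $(E \times_c G)^1$, and $\mathcal{A} \times G$ (Lemma~\ref{flga}~(\ref{flga4})), so each orbit meets the "first coordinate" slice in exactly one value.

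Next I would verify that $\varphi$ is a labeled graph morphism in the sense of Definition~\ref{lgmorph}. For the range/source compatibility (\ref{graphmorph}): by \eqref{rsmaps} the range of $G(e,g)$ in the quotient is $G\,r_c(e,g) = G(r(e), gc(e)) = \{r(e)\} \times G$, which $\varphi^0$ sends to $r(e) = r(\varphi^1(G(e,g)))$; the source computation is the same using $s_c(e,g) = (s(e),g)$. For the labeling compatibility (\ref{lgmorph2}): the quotient labeling sends $G(e,g)$ to $G\,\mathcal{L}_d(e,g) = G(\mathcal{L}(e), gd(e)) = \{\mathcal{L}(e)\} \times G$, and $\varphi^{\mathcal{A}\times G}$ of this is $\mathcal{L}(e) = \mathcal{L}(\varphi^1(G(e,g)))$, so $\mathcal{L} \circ \varphi^1 = \varphi^{\mathcal{A}\times G} \circ (\mathcal{L}_d/G)$ as required. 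Since $\varphi^0, \varphi^1, \varphi^{\mathcal{A}\times G}$ are all bijections, $\varphi$ is a labeled graph isomorphism.

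Finally, for the isomorphism to be an isomorphism \emph{of labeled graph actions} in the sense used later (and to match the statement of the Gross--Tucker theorem to come), I would record that the quotient $(E/G, \mathcal{L}/G)$ carries the trivial $G$-action and that $\varphi$ is trivially equivariant for it, or — if the intended reading is just the bare labeled graph isomorphism of Proposition~\ref{skewquo} as literally stated — simply stop after the previous paragraph. I do not expect a serious obstacle here: the only thing to be careful about is the bookkeeping that each orbit is genuinely a "horizontal slice" $\{x\} \times G$, which is exactly the freeness assertion of Lemma~\ref{flga}, so all well-definedness and injectivity claims reduce to that. The mild subtlety — and the one place I would slow down — is making sure the alphabet map $\varphi^{\mathcal{A}\times G}$ is handled on equal footing with $\varphi^0$ and $\varphi^1$, since the skew product labeling $\mathcal{L}_d$ uses $d$ and one must confirm that the orbit of a label $(a,g)$ under $\tau^{\mathcal{A}}$ still collapses cleanly to $a$ regardless of $d$; it does, because $\tau^{\mathcal{A}}_h(a,g) = (a,hg)$ ignores $d$ entirely.
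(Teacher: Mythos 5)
Your proof is correct; the paper omits this proof as routine, and your explicit construction --- identifying each $\tau$-orbit $G(x,g)=\{x\}\times G$ with its first coordinate and checking the range, source and labeling compatibilities directly from the definitions of $r_c$, $s_c$ and $\mathcal{L}_d$ --- is exactly the intended routine argument. One cosmetic remark: well-definedness and injectivity of $\varphi^0$, $\varphi^1$ and $\varphi^{\mathcal{A}\times G}$ need only that left translation fixes the first coordinate and acts transitively on the second, not freeness per se, though of course Lemma~\ref{flga} gives you that too.
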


\begin{exam} \label{ex:freeactex}
Recall the labeled graphs $(E, \mathcal{L})$ and $(E \times_c \mathbb{Z},
\mathcal{L}_d)$ from 
Example~\ref{spexs}.  For the left labeled graph translation action $((E
\times_c \mathbb{Z}, \mathcal{L}_d), \mathbb{Z}, \tau)$ we have $((E \times_c
\mathbb{Z})/ \mathbb{Z}, \mathcal{L}_d/ \mathbb{Z}) \cong (E, \mathcal{L})$ by
Proposition~\ref{skewquo}.
\end{exam}

\noindent The Gross-Tucker theorem is a converse to Proposition~\ref{skewquo}. 
It states that if we have a free action of a group on a labeled graph, then we
can recover the original graph from the quotient via a skew product. Recall the following definition for directed graphs.

\begin{defi}
Let $F, E$ be directed graphs.  A surjective graph morphism $p: F \to E$ has the
\emph{unique path lifting property} if given $u \in F^0$ and $e \in E^1$ with
$s(e) = p^0(u)$ there is a unique edge $f \in F^1$ with $s(f)=u$ and $p^1(f)= e$.
\end{defi}

\begin{rem} \label{ex:quotplp}
 Let $(E,G,\alpha)$ be a free graph action. Then the quotient map $q : E \to E/G$ has the unique path lifting property (see \cite[\S 5]{kp} or \cite[p.67]{gt} for instance).
\end{rem}

\begin{defis}
Let $((E, \mathcal{L}), G, \alpha)$ be a labeled graph action and 
$q=(q^0, q^1, q^{\mathcal{A}}):(E, \mathcal{L}) \to (E/G, \mathcal{L}/G)$ be the
quotient labeled map.  A \emph{section} for $q^i$ is a map $\eta^i: (E/G)^i \to
E^i$ for $i= 0,1$ such that $q^i \circ \eta^i= \on{id}_{(E/G)^i}$.  A
\emph{section} for $q^{\mathcal{A}}$ is $\eta^{\mathcal{A}}: \mathcal{A}/G \to
\mathcal{A}$ such that $q^{\mathcal{A}} \circ \eta^{\mathcal{A}}=
\on{id}_{\mathcal{A}/G}$.
\end{defis}

\begin{lem} \label{eta1}
Let $(E, G, \alpha)$ be a graph action and $q=(q^0, q^1):E \to E/G$ be the
quotient map. 
Given a section $\eta^0$ for $q^0$ there is a unique section $\eta^1$ for $q^1$ 
such that
\begin{equation} \label{eq:eta0eta1}
 s ( \eta^1 ( Ge ) ) =  \eta^0 ( s ( Ge ) ) \text{ for all } e \in E^1 .
\end{equation}
\end{lem}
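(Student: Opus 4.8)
The plan is to use the unique path lifting property of the quotient map $q : E \to E/G$, which holds because the action is free (Remark~\ref{ex:quotplp}). Given the section $\eta^0$ for $q^0$, I want to define $\eta^1(Ge)$ for each $Ge \in (E/G)^1$ by lifting the edge $Ge$ through the vertex $\eta^0(s(Ge))$. Concretely, fix $Ge \in (E/G)^1$ and set $u = \eta^0(s(Ge)) \in E^0$; then $q^0(u) = s(Ge) = s(Ge)$, so $Ge$ is an edge of $E/G$ whose source is $q^0(u)$. By the unique path lifting property there is a unique edge $f \in E^1$ with $s(f) = u$ and $q^1(f) = Ge$. I then define $\eta^1(Ge) := f$. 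By construction $q^1(\eta^1(Ge)) = Ge$, so $\eta^1$ is a section for $q^1$, and $s(\eta^1(Ge)) = u = \eta^0(s(Ge))$, which is exactly \eqref{eq:eta0eta1}.

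Next I would prove uniqueness. Suppose $\eta^1$ and $\tilde\eta^1$ are two sections for $q^1$ both satisfying \eqref{eq:eta0eta1}. Fix $Ge \in (E/G)^1$. Then $s(\eta^1(Ge)) = \eta^0(s(Ge)) = s(\tilde\eta^1(Ge))$, so both $\eta^1(Ge)$ and $\tilde\eta^1(Ge)$ are edges of $E$ with the same source vertex $u := \eta^0(s(Ge))$ and with $q^1(\eta^1(Ge)) = Ge = q^1(\tilde\eta^1(Ge))$. By the uniqueness clause in the unique path lifting property applied to the pair $(u, Ge)$, we conclude $\eta^1(Ge) = \tilde\eta^1(Ge)$. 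Since $Ge$ was arbitrary, $\eta^1 = \tilde\eta^1$.

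One small point that needs checking is that $\eta^1(Ge)$ is well-defined as a function of the orbit $Ge$ rather than of a chosen representative $e$: but this is automatic, since the construction takes $Ge$ itself (an edge of $E/G$) as input and applies unique path lifting to it, never referring to any particular representative $e \in E^1$. The map $q^0$ applied to $u = \eta^0(s(Ge))$ genuinely returns $s(Ge)$ because $\eta^0$ is a section, so the hypotheses of unique path lifting are met. The main (and really the only) obstacle is simply invoking the unique path lifting property correctly — once Remark~\ref{ex:quotplp} is in hand, both existence and uniqueness of $\eta^1$ fall out immediately, so the proof is short. I would write it out in roughly a paragraph.
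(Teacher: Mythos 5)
Your proof is correct and follows essentially the same route as the paper: both invoke the unique path lifting property of $q$ from Remark~\ref{ex:quotplp} to define $\eta^1(Ge)$ as the unique lift of $Ge$ with source $\eta^0(s(Ge))$, and both derive uniqueness of $\eta^1$ from the uniqueness clause of path lifting. Your explicit treatment of well-definedness and of the uniqueness argument is slightly more detailed than the paper's, but the substance is identical.
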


\begin{proof}
By Remark \ref{ex:quotplp} the quotient map $q : E \to E/G$ has the unique path
lifting property. 
Hence if we fix $Gv \in (E/G)^0$, then for each $Ge \in (E/G)^1$ with $s(Ge)=
Gv$ there is a unique $f \in E^1$ with $q^1 (f)= Ge=Gf$ and $s(f)= \eta^0(Gv)$. 
Put $\eta^1 (Ge)= f$, then $\eta^1 : (E/G)^1 \to E^1$ is well-defined and the
source map on $(E/G)^1$ is well-defined.  Since $q^1 (\eta^1(Ge))=q^1(f)=Ge$ it
follows that $\eta^1$ is a section  satisfying \eqref{eq:eta0eta1}. Uniqueness
of $\eta^1$ follows from the unique path lifting property of $q$.
\end{proof}

\noindent The following is a version of the Gross-Tucker Theorem
(cf.\ \cite[Theorem 2.2.2]{gt}) for labeled graphs.

\begin{thm} \label{gttlg}
Let $((E, \mathcal{L}), G, \alpha)$ be a free labeled graph action.
Let $\eta^0, \eta^{\mathcal{A}}$ be sections for $q^0,
q^{\mathcal{A}}$ respectively.  There are functions $c,
d : (E/G)^1  \to G$ such that $((E, \mathcal{L}), G, \alpha)$
is isomorphic to $((E/G \times_{c} G, (\mathcal{L}/G)_{d}), G, \tau)$.
\end{thm}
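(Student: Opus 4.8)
The plan is to build the skew product data directly from the chosen sections and the unique path lifting property, then exhibit an explicit equivariant labeled graph isomorphism. First I would invoke Lemma~\ref{eta1} to obtain from $\eta^0$ the unique edge section $\eta^1$ satisfying $s(\eta^1(Ge)) = \eta^0(s(Ge))$. Using freeness, for each $x \in E^i$ ($i=0,1$) there is a \emph{unique} $g \in G$ with $x = \alpha_g^i(\eta^i(Gx))$; call it $\gamma^i(x)$, so $\gamma^i : E^i \to G$ is well-defined and satisfies $\gamma^i(\alpha_h^i(x)) = h\gamma^i(x)$. Likewise, using the freeness hypothesis on the alphabet, for each $a \in \mathcal{A}$ there is a unique $\gamma^{\mathcal{A}}(a) \in G$ with $a = \alpha_{\gamma^{\mathcal{A}}(a)}^{\mathcal{A}}(\eta^{\mathcal{A}}(Ga))$, and $\gamma^{\mathcal{A}}(\alpha_h^{\mathcal{A}}(a)) = h\gamma^{\mathcal{A}}(a)$. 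These three ``coordinate'' maps are the engine of the proof.

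Next I would define the structure maps $c, d : (E/G)^1 \to G$ by
\[
c(Ge) = \gamma^0(r(\eta^1(Ge))), \qquad d(Ge) = \gamma^{\mathcal{A}}(\mathcal{L}(\eta^1(Ge))),
\]
which is well-defined since $\eta^1$ picks a canonical representative of each orbit. Now define $\varphi = (\varphi^0, \varphi^1, \varphi^{\mathcal{A}})$ from $(E,\mathcal{L})$ to $((E/G) \times_c G, (\mathcal{L}/G)_d)$ by $\varphi^i(x) = (Gx, \gamma^i(x))$ for $i = 0,1$ and $\varphi^{\mathcal{A}}(a) = (Ga, \gamma^{\mathcal{A}}(a))$. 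Each $\varphi^i$ is a bijection: injectivity because $Gx$ together with $\gamma^i(x)$ recovers $x$ via $x = \alpha^i_{\gamma^i(x)}(\eta^i(Gx))$, and surjectivity because given $(Gx, g)$ the element $\alpha^i_g(\eta^i(Gx))$ maps onto it. Then I would check the three requirements: (1) $\varphi$ respects range and source, i.e. $\varphi^0(s(e)) = s_c(\varphi^1(e))$ and $\varphi^0(r(e)) = r_c(\varphi^1(e))$ — the source identity is immediate from the normalization $s(\eta^1(Ge)) = \eta^0(s(Ge))$, and the range identity is exactly where the definition of $c$ is forced; (2) $\varphi$ intertwines the labelings, $(\mathcal{L}/G)_d(\varphi^1(e)) = \varphi^{\mathcal{A}}(\mathcal{L}(e))$, which forces the definition of $d$ and uses the compatibility \eqref{compatible}; (3) equivariance, $\varphi \circ \alpha_h = \tau_h \circ \varphi$, which reduces to the cocycle-type identities $\gamma^i(\alpha^i_h(x)) = h\gamma^i(x)$ recorded above, matching the left-translation formula $\tau_h(x,g) = (x, hg)$.

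The main obstacle — really the only subtle point — is verifying the range identity in (1) and the labeling identity in (2) for an arbitrary edge $e$, not just for the chosen representative $\eta^1(Ge)$. Write $e = \alpha^1_h(\eta^1(Ge))$ with $h = \gamma^1(e)$; then one must show $\gamma^0(r(e)) = \gamma^1(e)\, c(Ge)$ and $\gamma^{\mathcal{A}}(\mathcal{L}(e)) = \gamma^1(e)\, d(Ge)$. Both follow by applying $\alpha_h$ to the defining relation for the representative and using that $\gamma^0$, $\gamma^{\mathcal{A}}$ convert $\alpha_h$ into left multiplication by $h$, together with $r(\alpha^1_h(f)) = \alpha^0_h(r(f))$ and $\mathcal{L}(\alpha^1_h(f)) = \alpha^{\mathcal{A}}_h(\mathcal{L}(f))$. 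Once these identities are in hand, all remaining verifications are bookkeeping, and the triple $\varphi$ is the desired equivariant labeled graph isomorphism, so $((E,\mathcal{L}),G,\alpha) \cong (((E/G) \times_c G, (\mathcal{L}/G)_d), G, \tau)$.
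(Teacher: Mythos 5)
Your proposal is correct and follows essentially the same route as the paper's proof: the same sections $\eta^0,\eta^1,\eta^{\mathcal{A}}$, the same freeness-based definitions of $c$ and $d$, and the same equivariance checks, the only (cosmetic) difference being that you write the isomorphism from $(E,\mathcal{L})$ to the skew product via the coordinate maps $\gamma^i$, whereas the paper writes its inverse $(Gx,g)\mapsto\alpha_g^i\eta^i(Gx)$. Your identification of $\gamma^0(r(e))=\gamma^1(e)\,c(Ge)$ and $\gamma^{\mathcal{A}}(\mathcal{L}(e))=\gamma^1(e)\,d(Ge)$ as the key verifications is exactly where the paper's argument does its work as well.
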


\begin{proof}
Fix a section $\eta^0: (E/G)^0 \to E^0$ for $q^0$. By Lemma~\ref{eta1} there is
a section $\eta^1$ for $q^1$satisfying \eqref{eq:eta0eta1}. For $Ge
\in (E/G)^1$ set $f= \eta^1 (Ge)$,  then
\[
q^0(r(\eta^1(Ge)))=  q^0 ( r(f) ) = Gr(f)= r(Gf)= r(Ge)= q^0(\eta^0(r(Ge))).
\]

\noindent
As $(E, G, \alpha)$ is free, there is a unique $h \in G$ such that
$\alpha^0_h \eta^0(r(Ge))= r(\eta^1(Ge))$ and we may set $c(Ge)=h$.
Define $\phi : E/G \times_{c} G
\to E$ by
\[
 \phi^0_{c} (Gv,g) = \alpha^0_g \eta^0(Gv) \text{ and }
\phi^1_{c} (Ge,g)= \alpha^1_g
\eta^1(Ge)
\]

\noindent for $(Gv,g) \in  (E/G \times_{c} G)^0$ and $(Ge, g) \in (E/G
\times_{c} G)^1$.
One checks that $\phi_{c}: (E/G \times_{c} G) \to E$ is an
isomorphism of directed graphs.

We claim that $\phi_c$ is equivariant.
Notice that for all $(Gv,h) \in (E/G \times_{c} G)^0$ and $g \in G$ we
have
\[
\phi^0_{c} (\tau^0_g (Gv,h))= \phi^0_{c} (Gv, gh) = \alpha^0_{gh}
\eta^0 (Gv)=
\alpha^0_g \alpha^0_h \eta^0 (Gv) = \alpha^0_g \phi^0_{c} (Gv,h)
\]

\noindent
 and so $\phi_{c}^0 \circ \tau_g^0 = \alpha_g^0 \circ \phi_{c}^0$ for
all $g \in G$.
The argument for $\phi^1_{c}$ is similar and  our claim follows.

We now construct an equivariant bijection $\phi_{d}^{\mathcal{A}/G
\times G} : \mathcal{A} / G \times G \to \mathcal{A}$ which satisfies condition
\eqref{lgmorph2}
of Definition \ref{lgmorph}.
Fix a section $\eta^{\mathcal{A}}: \mathcal{A}/G \to \mathcal{A}$ for
$q^{\mathcal{A}}$. 
We now define a map $d: (E/G)^1 \to G$. Fix $Ge \in (E/G)^1$ and set $f =
\eta^1 ( Ge)$ so that $q^1 ( f ) = Ge$. Since
\[
 q^{\mathcal{A}} \eta^{\mathcal{A}}(\mathcal{L}/G(Ge))= q^{\mathcal{A}} \eta^{\mathcal{A}}(G \mathcal{L} (f))
= q^{\mathcal{A}} \mathcal{L} \eta^1(Ge)
\]

\noindent and  the graph action $((E, \mathcal{L}), G, \alpha)$ is free, there
is a unique 
$k \in G$ such that $\alpha^{\mathcal{A}}_k \eta^{\mathcal{A}}((\mathcal{L}/G)
(Ge)) = \mathcal{L}(\eta^1(Ge))$ and we may define $d(Ge)=k$.
 The function $d : (E/G)^1 \to G$ described in this way is such that
$d(Ge)$ is the unique element of $G$ with the property that
\begin{equation} \label{chardeta}
\alpha^{\mathcal{A}}_{d(Ge)} \eta^{\mathcal{A}}((\mathcal{L}/G) (Ge)) = \mathcal{L}(\eta^1(Ge)).
\end{equation}

\noindent
For each $(Ga,g) \in \mathcal{A}/G \times G$ we define $\phi_{d}^{\mathcal{A}/G \times G}: \mathcal{A}/G \times G \to \mathcal{A}$ by $\phi^{\mathcal{A}/G \times G}_{d} (Ga,g)= \alpha^{\mathcal{A}}_g \eta^{\mathcal{A}}(Ga)$.
 We claim that $\phi_{d}^{\mathcal{A}/G\times G}$ satisfies 
$\phi^{\mathcal{A}/G \times G}_{d} \circ (\mathcal{L}/G)_{d} =
\mathcal{L} \circ \phi^1_{c}$: By~\eqref{chardeta} for all $(Ge, h) \in
(E/G \times_{c} G)^1$ we have
\[
\phi^{\mathcal{A}/G \times G}_{d} \circ (\mathcal{L}/G)_{d} (Ge,h) = \alpha^{\mathcal{A}}_h \alpha^{\mathcal{A}}_{d(Ge)} \eta_{\mathcal{A}} ({\mathcal{L}}/G (Ge)) = \mathcal{L}(\alpha^1_h \eta^1 (Ge))= \mathcal{L} \circ \phi^1_{c} (Ge,h)
\]

\noindent
as required.

It is straightforward to see that $\phi^{\mathcal{A}/G \times G}_{d}$ is
bijective. To see that $\phi^{\mathcal{A}/G \times G}_{d}$ is equivariant notice that we have
\[
\phi^{\mathcal{A}/G \times G}_{d} (\tau^{\mathcal{A}/G \times G}_g (Ge,
h)) = 
\phi^{\mathcal{A}/G \times G}_{d} (Ge, gh) = \alpha^{\mathcal{A}}_g
\alpha^{\mathcal{A}}_h \eta^{\mathcal{A}} (Ge) = \alpha^{\mathcal{A}}_g
\phi^{\mathcal{A}/G \times G}_{d} (Ge, h)
\]

\noindent
for all $(Ge, h) \in (E/G \times G)^1$ and $g \in G$.  Thus 
$\phi_{{c},{d}}=(\phi^0_{c}, \phi^1_{c},
\phi^{\mathcal{A}/G \times G}_{d})$ is the required labeled
graph isomorphism.
\end{proof}

\begin{rem} \label{explaind}
The possibility that two edges in the quotient graph have the same label 
means that we must choose a separate section $\eta^\mathcal{A}$ for
$q^\mathcal{A}$. In turn means that the function
$d$ given in the definition of a skew product labeled graph plays a crucial role in the reconstruction of
the labeled graph action in Theorem~\ref{gttlg}.
\end{rem}

\begin{exam}  
Recall from Example \ref{ex:freeactex}  the labeled graph $( E \times_c \mathbb{Z} , \mathcal{L}_d )$ has a free action of $\mathbb{Z}$ such
that the quotient labeled graph is $(E , \mathcal{L} )$. We use this example to illustrate the point made in Remark \ref{explaind}:

Suppose we choose a section $\eta^0 : E^0 \to (E \times_c \mathbb{Z} )^0$ such that $\eta^0(v)=(v,0)$ and $\eta^0(w)=(w,2)$, then the section $\eta^1 : E^1 \to ( E \times_c \mathbb{Z} )^1$ as defined in Lemma~\ref{eta1} is  given by $\eta^1(e)= (e,0)$, $\eta^1(f)= (f,0)$, and $\eta^1(g)= (g,2)$  whose
image in $(E \times_c \mathbb{Z} ,\mathcal{L}_d )$ is as shown below.
\[
\begin{tikzpicture}
    \def\vertex(#1) at (#2,#3){
        \node[inner sep=0pt, circle, fill=black] (#1) at (#2,#3)
        [draw] {.};   }
    \vertex(11) at (0, 1)
    \node[inner sep=3pt, anchor = south] at (11.north)
    {$\scriptstyle (v,0)$};
    \vertex(12) at (0, -1)
    \node[inner sep=3pt, anchor = north] at (12.south)
    {$\scriptstyle (w,0)$};
    \vertex(21) at (2, 1)
    \node[inner sep=3pt, anchor = south] at (21.north)
    {$\scriptstyle (v,1)$};
    \vertex(22) at (2, -1)
    \node[inner sep=3pt, anchor = north] at (22.south)
    {$\scriptstyle (w,1)$};
    \vertex(31) at (4, 1)
    \node[inner sep=3pt, anchor = south] at (31.north)
    {$\scriptstyle (v,2)$};
    \vertex(32) at (4, -1)
    \node[inner sep=3pt, anchor = north] at (32.south)
    {$\scriptstyle (w,2)$};
    \vertex(41) at (6, 1)
    \node[inner sep=3pt, anchor = south] at (41.north)
    {$\scriptstyle (v,3)$};
    \vertex(42) at (6, -1)
    \node[inner sep=3pt, anchor = north] at (42.south)
    {$\scriptstyle (w,3)$};
    \node at (7, 1) {$\dots$};
    \node at (7, -1) {$\dots$};
    \node at (-1, 1) {$\dots$};
    \node at (-1, -1) {$\dots$};

    \draw[style=semithick, -latex] (11.east)--(21.west) node[pos=0.5, anchor=south, inner sep=1pt]{$\scriptstyle (1,0)$};
     \draw[style=semithick, -latex] (11.east)--(21.west) node[pos=0.5, anchor=north, inner sep=1pt]{$\scriptstyle (e,0)$};
    \draw[style=semithick, -latex] (11.south east)--(22.north west) node[pos=0.75, circle, anchor=north east, inner sep=0pt]{$\scriptstyle (0,0)$};
 \draw[style=semithick, -latex] (11.south east)--(22.north west) node[pos=0.75, circle, anchor=south west, inner sep=0pt]{$\scriptstyle (f,0)$};
    \draw[style=semithick, -latex] (32.north east)--(41.south east) node[pos=0.2, circle, anchor=south east, inner sep=0pt]{$\scriptstyle (0,2)$};
    \draw[style=semithick, -latex] (32.north east)--(41.south east) node[pos=0.2, circle, anchor=north west, inner sep=0pt]{$\scriptstyle (g,2)$};

\end{tikzpicture}
\]
Note that $c(e)=1$, $c(f)=-1$, and $c(g)=3$.

Observe that $f,g \in E^1$ are such that $\mathcal{L} (f) = \mathcal{L} (g)=0$ however,
\[
 \mathcal{L}(\eta^1(f))= \mathcal{L} (f,0) = (0,0) \neq (0,2) = \mathcal{L}
(g,2) = \mathcal{L}(\eta^1(g))
\]

\noindent The function $d$ accounts for this difference.
By Equation~\eqref{chardeta} we have $d (g)=2$, since
$\alpha^{\mathcal{A}}_2(0,0)=(0,2)$, whereas $d(f)=0$.  
Observe that $d(g) \neq d(f)$ even though $\mathcal{L}(g)=
\mathcal{L}(f)$.
\end{exam}

\section{Coactions on Labeled Graph Algebras} \label{coactions}

In \cite{kqr} it is shown that a  function $c : E^1 \to G$ induces a coaction $\delta$ of
$G$ on the graph algebra $C^*(E)$ such that 
$C^* ( E) \times_\delta G \cong C^* (E  \times_c G )$.
One should expect, therefore, that the functions $c,d: E^1 \to G$ would induce a
coaction $\delta$ of $G$ on
$C^* (E , \mathcal{L} )$ such that $C^* ( E , \mathcal{L} ) \times_\delta G
\cong C^* (E \times_c G, \mathcal{L}_d)$.   However in order to obtain such a result we must assume that both functions $c,d$ are label consistent (see Definition~\ref{def:lc} below).  For further
information about coactions of discrete groups see \cite{Q}, amongst others.

\begin{defi} \label{def:lc}
Let $(E, \mathcal{L})$ be a labeled graph over alphabet $\mathcal{A}$.  
A function $c: E^1 \to G$ is \emph{label consistent} if there
is a function $C : \mathcal{A} \to G$ such that $c = C \circ \mathcal{L}$.
\end{defi}

\noindent For any labeled graph $( E , \mathcal{L} )$ the function $\mathbf{1} :
E^1 \to
G$ given by $\mathbf{1} (e) = 1_G$ for all $e \in E^1$ is label consistent.
Firstly we show that if $c$ is label consistent then there is a coaction of $G$ on $C ( E , \mathcal{L} )$.

\begin{prop} \label{coact}
Let  $(E, \mathcal{L})$ be a weakly left-resolving, set-finite labeled graph, $G$ be a discrete group, and $c: E^1
\to G$ be a label consistent function.  Then there is a maximal, normal coaction
$\delta: C^*(E, \mathcal{L}) \to C^*(E, \mathcal{L}) \otimes C^*(G)$ such that
\begin{equation} \label{coaction}
\delta(s_a)= s_a \otimes u_{C(a)} \text{ and }
\delta(p_{r(\beta)})= p_{r(\beta)} \otimes u_{1_G}
\end{equation}

\noindent
where $\{s_a, p_{r(\beta)} \}$ is a universal Cuntz-Krieger $(E,
\mathcal{L})$-family and $\{u_g: g\in G\}$ 
are the canonical generators of $C^*(G)$.
\end{prop}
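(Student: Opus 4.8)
The plan is to obtain $\delta$ from the universal property of $C^*(E,\mathcal{L})$ and then check the coaction axioms on the canonical generators. Write $c=C\circ\mathcal{L}$ with $C:\mathcal{A}\to G$; the role of label consistency is precisely that it makes the assignment $a\mapsto s_a\otimes u_{C(a)}$ well defined. First I would verify that $\{\,s_a\otimes u_{C(a)}:a\in\mathcal{A}\,\}$ together with $\{\,p_{r(\beta)}\otimes u_{1_G}:\beta\in\mathcal{L}^+(E)\,\}$ is a Cuntz-Krieger $(E,\mathcal{L})$-family in $C^*(E,\mathcal{L})\otimes C^*(G)$: each of (CK1a)--(CK4) of Definition~\ref{CKfam} reduces to a short computation, using for the nonvanishing clauses in (CK1a) and (CK1b) that $x\mapsto x\otimes 1=x\otimes u_{1_G}$ is an injective homomorphism, and for (CK3) and (CK4) the cancellations $u_{C(a)}^{*}u_{C(a)}=u_{1_G}$ and $u_{C(a)}u_{1_G}u_{C(a)}^{*}=u_{1_G}$. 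By universality there is a homomorphism $\delta:C^*(E,\mathcal{L})\to C^*(E,\mathcal{L})\otimes C^*(G)$ taking the prescribed values on generators.

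Since $\delta$ is a homomorphism it respects the finite intersections, unions and relative complements that build the sets of $\mathcal{E}(r,\mathcal{L})$ from the $r(\beta)$, so $\delta(p_A)=p_A\otimes 1$ for every $A\in\mathcal{E}(r,\mathcal{L})$, and hence on the spanning family of Theorem~\ref{thm:links}
\[
\delta(s_\alpha p_A s_\beta^{*})=s_\alpha p_A s_\beta^{*}\otimes u_{h(\alpha,\beta)},\qquad h(\alpha,\beta)=C(\alpha_1)\cdots C(\alpha_{|\alpha|})\,C(\beta_{|\beta|})^{-1}\cdots C(\beta_1)^{-1}.
\]
The coaction identity $(\delta\otimes\mathrm{id})\circ\delta=(\mathrm{id}\otimes\delta_G)\circ\delta$ is then immediate on each $s_a$ (both sides give $s_a\otimes u_{C(a)}\otimes u_{C(a)}$) and on each $p_{r(\beta)}$; coaction nondegeneracy holds because $\delta(s_\alpha p_A s_\beta^{*})(1\otimes u_{h(\alpha,\beta)^{-1}g})=s_\alpha p_A s_\beta^{*}\otimes u_g$, so $\clsp\{\delta(x)(1\otimes z):x\in C^*(E,\mathcal{L}),\,z\in C^*(G)\}$ contains a spanning set for $C^*(E,\mathcal{L})\otimes C^*(G)$; and $\delta$ is injective since $(\mathrm{id}\otimes\epsilon_G)\circ\delta=\mathrm{id}$, where $\epsilon_G:C^*(G)\to\mathbb{C}$ is the trivial representation.

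Normality is where I would bring in the corrected gauge-invariant uniqueness theorem. Setting $\delta_r=(\mathrm{id}\otimes\lambda_G)\circ\delta:C^*(E,\mathcal{L})\to C^*(E,\mathcal{L})\otimes C^*_r(G)$, the coaction $\delta$ is normal exactly when $\delta_r$ is faithful. But $\{\,s_a\otimes\lambda_G(u_{C(a)}),\,p_A\otimes 1\,\}$ is again a representation of the labeled space $(E,\mathcal{L},\mathcal{E}(r,\mathcal{L}))$ (using Theorem~\ref{thm:links}), with $p_A\otimes 1\neq 0$ for every $\emptyset\neq A\in\mathcal{E}(r,\mathcal{L})$ because $p_A\neq 0$ in $C^*(E,\mathcal{L})$ and $x\mapsto x\otimes 1$ is injective, and $\gamma'\otimes\mathrm{id}$ is a strongly continuous $\mathbb{T}$-action on $\delta_r(C^*(E,\mathcal{L}))$ satisfying $(\gamma'_z\otimes\mathrm{id})\circ\delta_r=\delta_r\circ\gamma'_z$; so Theorem~\ref{GIUT} shows $\delta_r$ is faithful and $\delta$ is normal. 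For maximality I would use the spectral subspaces $A_g=\clsp\{s_\alpha p_A s_\beta^{*}:h(\alpha,\beta)=g\}$ of $\delta$ to identify $C^*(E,\mathcal{L})$ with the full cross-sectional $C^*$-algebra of the Fell bundle $\{A_g\}_{g\in G}$---a representation of this bundle yields a Cuntz-Krieger $(E,\mathcal{L})$-family because $s_a\in A_{C(a)}$ and $p_{r(\beta)}\in A_{1_G}$---since a full cross-sectional algebra carries a maximal coaction; alternatively, one mimics the directed-graph argument of \cite{kqr}. I expect maximality to be the main obstacle: it is the one step that is not pure bookkeeping, and it requires care because Definition~\ref{CKfam} builds the nonvanishing clauses $p_{r(\beta)}p_{r(\omega)}\neq 0$ and $p_{r(\beta)}-p_{r(\omega)}\neq 0$ into the notion of a Cuntz-Krieger family, so one must check that a Fell-bundle representation of $\{A_g\}$ really does produce a legitimate such family and that the resulting homomorphism inverts the canonical surjection.
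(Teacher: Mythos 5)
Your proposal is correct and follows essentially the same route as the paper, whose proof simply defers to \cite[Lemma 3.2]{kqr} for the construction of $\delta$ via a Cuntz--Krieger family in $C^*(E,\mathcal{L})\otimes C^*(G)$, and to \cite[Lemma 3.3]{dpr} and \cite[Theorem 7.1 (v)]{pqr} for normality (via the reduced coaction and the gauge-invariant uniqueness theorem) and maximality (via the Fell-bundle/cross-sectional-algebra argument). You have merely written out the details that the paper leaves to those citations.
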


\begin{proof} The first part of the result follows by the same argument given in \cite[Lemma 3.2]{kqr}. That the coaction $\delta$ is normal and maximal follows by essentially the same arguments as the ones given in \cite[Lemma 3.3]{dpr} and \cite[Theorem 7.1 (v)]{pqr}.
\end{proof}

\noindent
The next result shows that if $d$ is label consistent then we may as well assume that $d = \mathbf{1}$.

\begin{prop} \label{prop:onewilldo}
Let $( E , \mathcal{L} )$ be a weakly left-resolving, set-finite labeled graph and $c : E^1 \to G$
a function. If $d_1 , d_2 : E^1 \to G$ are label consistent functions, then $(  (E
\times_c G , \mathcal{L}_{d_1} ) ,  G , \tau ) \cong ( ( E \times_c G , \mathcal{L}_{d_2} ) ,  G , \tau )$ where $\tau$ is the left translation action.
Hence if $d : E^1 \to G$ is a label consistent function
then there is an isomorphism from $C^* ( E \times_c G , \mathcal{L}_d )$ to $C^* ( E \times_c G ,
\mathcal{L}_{\mathbf{1}} )$ which is equivariant for the $G$--action induced by $\tau$.
\end{prop}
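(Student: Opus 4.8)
The plan is to build an explicit labeled graph isomorphism $\varphi : (E \times_c G, \mathcal{L}_{d_1}) \to (E \times_c G, \mathcal{L}_{d_2})$ that is the identity on vertices and edges and only reshuffles the second coordinate of the alphabet, then check it intertwines the two left translation actions $\tau$. Since $d_1, d_2$ are label consistent, write $d_j = D_j \circ \mathcal{L}$ for functions $D_j : \mathcal{A} \to G$. Set $\psi := D_2 D_1^{-1} : \mathcal{A} \to G$ (pointwise product in $G$), so that $d_2(e) = \psi(\mathcal{L}(e)) d_1(e)$ for every $e \in E^1$. Define $\varphi^0 = \on{id}$ on $(E \times_c G)^0 = E^0 \times G$, $\varphi^1 = \on{id}$ on $(E \times_c G)^1 = E^1 \times G$, and $\varphi^{\mathcal{A} \times G} : \mathcal{A} \times G \to \mathcal{A} \times G$ by $\varphi^{\mathcal{A} \times G}(a, g) = (a, g\psi(a))$. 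This last map is a bijection with inverse $(a,g) \mapsto (a, g\psi(a)^{-1})$, and $\varphi^0, \varphi^1$ are trivially bijective, so $\varphi$ is a candidate labeled graph isomorphism.

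The first key step is to verify condition~\eqref{graphmorph} of Definition~\ref{lgmorph}: since $\varphi^0$ and $\varphi^1$ are identities and the underlying directed graph $E \times_c G$ is literally the same on both sides (the maps $r_c, s_c$ depend only on $c$, not on $d_j$), this is immediate. The second key step is condition~\eqref{lgmorph2}, namely $\mathcal{L}_{d_2} \circ \varphi^1 = \varphi^{\mathcal{A} \times G} \circ \mathcal{L}_{d_1}$: for $(e,g) \in (E\times_c G)^1$ the left side is $\mathcal{L}_{d_2}(e,g) = (\mathcal{L}(e), g\,d_2(e))$, while the right side is $\varphi^{\mathcal{A}\times G}(\mathcal{L}(e), g\,d_1(e)) = (\mathcal{L}(e), g\,d_1(e)\,\psi(\mathcal{L}(e)))$; these agree precisely because $d_2(e) = d_1(e)\,\psi(\mathcal{L}(e))$ — note the group elements $d_1(e)$ and $\psi(\mathcal{L}(e))$ must be multiplied in the correct order, which is why I defined $\psi$ via $d_2 = \psi(\mathcal{L}(e)) d_1(e)$ or its mirror; I would fix conventions so this works out, replacing $\psi$ by the appropriate one-sided quotient. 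The third step is equivariance: for $g \in G$ and $(a,h) \in \mathcal{A}\times G$ we have $\varphi^{\mathcal{A}\times G}(\tau_g^{\mathcal{A}\times G}(a,h)) = \varphi^{\mathcal{A}\times G}(a, gh) = (a, gh\psi(a)) = \tau_g^{\mathcal{A}\times G}(a, h\psi(a)) = \tau_g^{\mathcal{A}\times G}(\varphi^{\mathcal{A}\times G}(a,h))$, since $\tau$ acts by left translation on the $G$-coordinate while $\varphi$ modifies it by right multiplication; and on vertices and edges both $\varphi$ and $\tau$ restrict to their obvious forms, so $\varphi \circ \tau_g = \tau_g \circ \varphi$ there as well. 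This gives the first assertion $((E\times_c G,\mathcal{L}_{d_1}),G,\tau) \cong ((E\times_c G,\mathcal{L}_{d_2}),G,\tau)$.

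For the second assertion, take $d_1 = d$ and $d_2 = \mathbf{1}$ (which is label consistent, with $D_2 \equiv 1_G$), and apply the isomorphism of labeled graph actions just constructed together with Theorem~\ref{induceact2}: isomorphic labeled graph actions induce isomorphic crossed products, and more to the point the underlying labeled graph isomorphism $\varphi$ induces, via Theorem~\ref{thm:induceact}, an isomorphism $C^*(E\times_c G, \mathcal{L}_d) \to C^*(E\times_c G, \mathcal{L}_{\mathbf{1}})$, which is equivariant for the induced $G$-actions precisely because $\varphi$ is equivariant for $\tau$. I expect the main (minor) obstacle to be bookkeeping the left-versus-right multiplication conventions in the second coordinate so that both the labeling-compatibility identity~\eqref{lgmorph2} and the equivariance identity hold simultaneously; once the correct one-sided quotient $\psi$ is pinned down, every verification is a one-line computation. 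A secondary point worth a sentence is confirming that $\varphi$ preserves the weakly-left-resolving and set-finite hypotheses needed to invoke Theorem~\ref{thm:induceact} — but since $\varphi$ fixes the underlying graph $E\times_c G$ and only relabels, and the remark after Definition~\ref{splg} already notes the skew product inherits left-resolvingness, this is automatic.
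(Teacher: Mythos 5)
Your proposal is correct and follows essentially the same route as the paper: the identity on vertices and edges, an alphabet bijection that right-multiplies the $G$-coordinate by the one-sided quotient (which, as you anticipated, must be $D_1^{-1}D_2$ rather than $D_2D_1^{-1}$, exactly the paper's $\phi^{\mathcal{A}\times G}(a,g)=(a,gD_1^{-1}(a)D_2(a))$), the same two verifications of label-compatibility and $\tau$-equivariance, and the same appeal to Theorem~\ref{induceact2} for the $C^*$-statement. No substantive differences.
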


\begin{proof} For the first statement let $\phi^i: ( E \times_c G )^i \to ( E
\times_c G)^i$ be the identity map for $i=0,1$ and
define $\phi^{\mathcal{A} \times G} : \mathcal{A} \times G \to \mathcal{A}
\times G$  by
\[
\phi^{\mathcal{A} \times G} ( a,g ) = ( a , g D_1^{-1} (a) D_2 (a) ) .
\]

\noindent  For $(e,g) \in ( E \times_c G )^1$, after a short calculation  we
have
\[
\phi^{\mathcal{A} \times G} \mathcal{L}_{d_1} (e,g) = ( \mathcal{L} (e) , d_2
(e) ) = \mathcal{L}_{d_2} ( e, g) .
\]

\noindent
It is then straightforward to check that $\phi = ( \phi^0 , \phi^1 ,
\phi^{\mathcal{A} \times G} )$ is a  labeled graph isomorphism. Since for all $h \in G$ we have
\[
\tau_h ( \phi^{\mathcal{A} \times G} ( a,g) ) = ( a , h g D_1^{-1} (a) D_2 (a) ) = 
  \phi^{\mathcal{A} \times G} ( \tau_h ( a,g) )
\]

\noindent it follows that $(  (E \times_c G , \mathcal{L}_{d_1} ) ,  G , \tau ) \cong ( ( E \times_c G , \mathcal{L}_{d_2} ) ,  G , \tau )$.

The final statement follows from Theorem~\ref{induceact2}.
\end{proof}

\begin{rem}
Thanks to Proposition \ref{prop:onewilldo} we may, without loss of generality,
assume that $d = \mathbf{1}$ when we are working with label consistent
$d$-functions. On the other hand it is not hard to see that a different choice
of label consistent functions $c$ will yield non-isomorphic skew-product graphs.
\end{rem}

\noindent
Next we shall show that if $d = \mathbf{1}$ then there is a natural identification $\mathcal{L}_\mathbf{1}^+ ( E \times_c G )$, the labeled path space of $( E \times_c G , \mathcal{L}_\mathbf{1} )$ with $\mathcal{L}^+ (E) \times G$.

\begin{lem} \label{lem:splabels}
Let $( E , \mathcal{L} )$ be a labeled graph and $c: E^1 \to G$ label
consistent. For $\mu \in E^+$ and $g \in G$ the map
\[
\mathcal{L}_\mathbf{1} ( \mu , g ) \mapsto ( \mathcal{L} ( \mu ) , g )
\]

\noindent establishes a bijection from $\mathcal{L}_\mathbf{1}^+ ( E \times_c G )$ to $\mathcal{L}^+ (E) \times G$. 
\end{lem}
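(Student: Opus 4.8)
The plan is to exhibit the claimed map as the comparison map between two natural surjections out of $E^+\times G$ and then to check that these surjections have the same fibres. By Remark~\ref{sppaths} the assignment $(\mu,g)\mapsto\mu_g$ identifies $E^+\times G$ with $(E\times_c G)^+$, so we have two surjections defined on $E^+\times G$: the map $q_1\colon(\mu,g)\mapsto(\mathcal{L}(\mu),g)$ onto $\mathcal{L}^+(E)\times G$, and the labelling map $q_2\colon(\mu,g)\mapsto\mathcal{L}_\mathbf{1}(\mu_g)$ onto $\mathcal{L}_\mathbf{1}^+(E\times_c G)$; surjectivity of each is immediate from the definition of the corresponding labelled path space. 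The map in the statement is exactly the map $\Theta$ determined by $\Theta\circ q_2=q_1$, so it suffices to prove that for $(\mu,g),(\nu,h)\in E^+\times G$ one has $q_1(\mu,g)=q_1(\nu,h)$ if and only if $q_2(\mu,g)=q_2(\nu,h)$: the forward implication shows $\Theta$ is well defined, the reverse implication shows $\Theta$ is injective, and surjectivity of $\Theta$ follows from surjectivity of $q_1$.

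First I would write $q_2$ out explicitly. Using the description of $\mu_g$ in Remark~\ref{sppaths} together with $\mathcal{L}_\mathbf{1}(e,g)=(\mathcal{L}(e),g)$, for $\mu=\mu_1\cdots\mu_n$ one gets
\[
\mathcal{L}_\mathbf{1}(\mu_g)=(\mathcal{L}(\mu_1),g)\,(\mathcal{L}(\mu_2),g\,c(\mu_1))\cdots(\mathcal{L}(\mu_n),g\,c(\mu_1\cdots\mu_{n-1})),
\]
a word in $\mathcal{A}\times G$. For the forward implication, comparing first coordinates of corresponding letters of $q_2(\mu,g)$ and $q_2(\nu,h)$ forces $\mathcal{L}(\mu_i)=\mathcal{L}(\nu_i)$ for all $i$ (in particular the two paths have the same length), hence $\mathcal{L}(\mu)=\mathcal{L}(\nu)$, while comparing the second coordinate of the first letter gives $g=h$; this direction uses nothing about $c$.

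The reverse implication is where label consistency of $c$ is genuinely needed, and it is the step I expect to be the main (if modest) obstacle: one must show that the $G$-offsets $g\,c(\mu_1\cdots\mu_{i-1})$ appearing in $\mathcal{L}_\mathbf{1}(\mu_g)$ depend only on $g$ and on the labelled path $\mathcal{L}(\mu)$, not on the chosen representative $\mu$. Assuming $q_1(\mu,g)=q_1(\nu,h)$ we have $g=h$ and $\mathcal{L}(\mu)=\mathcal{L}(\nu)$, so the two paths have the same length $n$ and $\mathcal{L}(\mu_i)=\mathcal{L}(\nu_i)$ for $1\le i\le n$. Writing $c=C\circ\mathcal{L}$ for some $C\colon\mathcal{A}\to G$ and using that $c$ is multiplicative along paths, $c(\mu_1\cdots\mu_{i-1})=C(\mathcal{L}(\mu_1))\cdots C(\mathcal{L}(\mu_{i-1}))=C(\mathcal{L}(\nu_1))\cdots C(\mathcal{L}(\nu_{i-1}))=c(\nu_1\cdots\nu_{i-1})$ for each $i$; together with $\mathcal{L}(\mu_i)=\mathcal{L}(\nu_i)$ this shows the $i$-th letters of $\mathcal{L}_\mathbf{1}(\mu_g)$ and $\mathcal{L}_\mathbf{1}(\nu_h)$ agree for all $i$, i.e.\ $q_2(\mu,g)=q_2(\nu,h)$. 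With the fibres matched, $\Theta$ is a well-defined bijection carrying $\mathcal{L}_\mathbf{1}(\mu,g)$ to $(\mathcal{L}(\mu),g)$, as required.
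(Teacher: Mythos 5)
Your proof is correct and follows essentially the same route as the paper's: both rest on the explicit formula $\mathcal{L}_\mathbf{1}(\mu_g)=(\mathcal{L}(\mu_1),g)(\mathcal{L}(\mu_2),gc(\mu_1))\cdots$ from Remark~\ref{sppaths}, after which the identification with $(\mathcal{L}(\mu),g)$ is read off. The paper simply declares the right-hand side to be $(\mathcal{L}(\mu),g)$ and asserts the result, whereas you make explicit the well-definedness and injectivity checks --- in particular the point that label consistency forces the offsets $gc(\mu_1\cdots\mu_{i-1})$ to depend only on $\mathcal{L}(\mu)$ and $g$ --- which is exactly the content the paper leaves implicit.
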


\begin{proof}
From Remark~\ref{sppaths} it follows that for $n \ge 1$ every path in $( E \times_c G )^n$ has the form 
$( \mu , g ) = ( \mu_1, g )( \mu_2, g c( \mu_1) ) \cdots ( \mu_n, g c(\mu') )$, for some $\mu \in E^n$ and $g \in G$. Then by definition we have 
\begin{equation} \label{eq:mugdef}
\mathcal{L}_\mathbf{1} ( \mu , g ) = (\mathcal{L}( \mu_1 ), g )(\mathcal{L}(\mu_2), g c(\mu_1) ) \cdots (\mathcal{L}(\mu_n), g c(\mu') ) .
\end{equation}

\noindent If we define the right hand side of \eqref{eq:mugdef} to be $( \mathcal{L} ( \mu ) , g )$ the result follows.
\end{proof}

\noindent
The following Lemma indicates the behavior of the range map under the identification of $\mathcal{L}^+_\mathbf{1} ( E \times_c G )$ with $\mathcal{L}^+ (E) \times G$.

\begin{lem} \label{helpCK}
Let $( E , \mathcal{L} )$ be a labeled graph and $c: E^1 \to G$ be a label consistent function.  Let $a \in \mathcal{A}$, $\beta \in \mathcal{L}^+(E)$, and $g \in G$.  Then under the identification of $\mathcal{L}^+(E) \times G$ with $\mathcal{L}^+_\mathbf{1} (E \times_c G)$ we have $r(\beta,g)= (r(\beta), g C(\beta)) \in\mathcal{E}(r, \mathcal{L}) \times G$.
\end{lem}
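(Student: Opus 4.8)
The plan is to compute $r(\beta,g)$ directly from the definition of the relative range in the skew product graph $E \times_c G$, then translate back through the identification of Lemma~\ref{lem:splabels}. Recall that for a labeled path $\delta \in \mathcal{L}_\mathbf{1}^+(E \times_c G)$ we have $r(\delta) = \{ r(\lambda) : \mathcal{L}_\mathbf{1}(\lambda) = \delta \}$, where $\lambda$ ranges over paths in $E \times_c G$. So first I would fix a representative $\mu \in E^+$ with $\mathcal{L}(\mu) = \beta$, form the path $(\mu, g) \in (E \times_c G)^+$, which by Lemma~\ref{lem:splabels} is a representative of the labeled path identified with $(\beta, g)$, and observe that by \eqref{noters} its range is $(r(\mu), g\, c(\mu))$. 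Since $c$ is label consistent with $c = C \circ \mathcal{L}$, extending $C$ multiplicatively to words gives $c(\mu) = C(\mathcal{L}(\mu)) = C(\beta)$, so this particular representative contributes the vertex $(r(\mu), g\, C(\beta))$.

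Next I would run over \emph{all} representatives. A path $\lambda \in (E \times_c G)^+$ with $\mathcal{L}_\mathbf{1}(\lambda)$ identified with $(\beta, g)$ must, again by Remark~\ref{sppaths} and Lemma~\ref{lem:splabels}, be of the form $(\nu, g)$ for some $\nu \in E^+$ with $\mathcal{L}(\nu) = \beta$ — the first coordinate $g$ is forced by the identification, and the requirement $\mathcal{L}(\nu) = \beta$ is exactly what it means for $(\nu,g)$ to represent the labeled path corresponding to $(\beta,g)$. Its range is then $(r(\nu), g\, c(\nu)) = (r(\nu), g\, C(\beta))$, where again label consistency collapses $c(\nu)$ to $C(\beta)$ independently of which representative $\nu$ we chose. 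Therefore
\[
r(\beta, g) = \{ (r(\nu), g\, C(\beta)) : \nu \in E^+,\ \mathcal{L}(\nu) = \beta \} = \{ r(\nu) : \mathcal{L}(\nu) = \beta \} \times \{ g\, C(\beta) \} = r(\beta) \times \{ g\, C(\beta) \},
\]
which is precisely the claimed identity $r(\beta, g) = (r(\beta), g\, C(\beta))$, with the convention that a subset $A \times \{h\}$ of $(E \times_c G)^0 = E^0 \times G$ is written $(A, h)$.

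Finally I would check the membership claim $r(\beta, g) \in \mathcal{E}(r, \mathcal{L}) \times G$. Since $r(\beta) \in \mathcal{E}(r, \mathcal{L})$ by definition of that collection (it contains $r(\beta)$ for every $\beta \in \mathcal{L}^+(E)$), and $g\, C(\beta) \in G$, the set $r(\beta) \times \{g\, C(\beta)\}$ lies in $\mathcal{E}(r,\mathcal{L}) \times G$, completing the proof. I do not anticipate a serious obstacle here; the only point requiring care is the bookkeeping in the identification — making sure that "all representatives of $(\beta,g)$" genuinely means "$(\nu,g)$ with $\mathcal{L}(\nu)=\beta$" and nothing more, which is guaranteed by the bijectivity established in Lemma~\ref{lem:splabels} — and the observation that label consistency is exactly what forces the $G$-coordinate of the range to be the same for every representative, so that $r(\beta,g)$ is a single "slice" $r(\beta) \times \{g\,C(\beta)\}$ rather than a more complicated subset of $E^0 \times G$.
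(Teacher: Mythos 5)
Your proposal is correct and follows essentially the same route as the paper: enumerate the representatives $(\mu,g)$ of the labeled path $(\beta,g)$, compute their ranges via \eqref{noters}, and use label consistency to collapse $c(\mu)$ to $C(\beta)$ so that the range is the single slice $r(\beta)\times\{gC(\beta)\}$. Your additional care about why every representative has the form $(\nu,g)$ with $\mathcal{L}(\nu)=\beta$ is implicit in the paper's appeal to \eqref{eq:mugdef} but is a reasonable point to make explicit.
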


\begin{proof}
Observe that for $(\beta,g) \in \mathcal{L}^+(E) \times G$  we have
\begin{equation} \label{rangebetag}\begin{array}{rcl}
r(\beta, g) &=& \{r( \mu , g ) : ( \mu , g ) \in E^* \times G , \mathcal{L} (\mu) = \beta\} \text{ by \eqref{eq:mugdef} } \\
 &=& \{ ( r(\mu), g C (\beta)) : \mathcal{L} (\mu) = \beta \} \text{ by \eqref{noters} }
\end{array}
\end{equation}

\noindent
since the function $c : E^1 \to G$ is label consistent.  Hence we may identify $r(\beta, g)$ with
$(r(\beta), g C(\beta)) \in \mathcal{E} ( r , \mathcal{L} ) \times G$. 
\end{proof}

\noindent
With the above identifications in mind, we turn our attention to the main result of this section.
By Theorem \ref{induceact2} the left labeled graph translation action
$((E \times_c G, \mathcal{L}_\mathbf{1} ), G, \tau)$ defined in
Definition~\ref{flgadef} induces an action
$\tau:G \to \on{Aut} C^*(E \times_c G, \mathcal{L}_\mathbf{1} )$.
When we identify $\mathcal{L}^+_\mathbf{1} ( E \times_c G )$ with $\mathcal{L}^+ (E) \times G$
this action may be described on the generators of $C^*(E \times_c G , \mathcal{L}_\mathbf{1} )$ as follows: For
$h,g \in G$, $a \in \mathcal{A}$, and $\beta \in \mathcal{L}^+(E)$ we have
\begin{equation} \label{inducetau}
\tau_h(s_{(a,g)})= s_{(a, hg)} \text{ and } \tau_h(p_{(r(\beta),g)})=p_{(r(\beta),hg)}.
\end{equation}

\noindent The method of proof for the next result closely follows that of \cite[Theorem
2.4]{kqr}, however we give some of the details as they rely heavily on the identification we  made in Lemma~\ref{helpCK}.

\begin{thm} \label{thm:skewiscoact}
Let  $(E, \mathcal{L})$ be a weakly left-resolving, set-finite labeled graph. Suppose that $G$ is a discrete
group, $c : E^1 \to G$ is a label consistent function, and $\delta$ is the
coaction from Proposition~\ref{coact}.
Let $j_{C^* ( E , \mathcal{L} )} , j_G$ denote the canonical covariant
homomorphisms of $C^* (E , \mathcal{L} )$
and $C^* (G)$ into $M ( C^* ( E , \mathcal{L} ) \times_\delta G )$ and $\{
s_{(a,g)} , p_{(r ( \beta ) ,g)} \}$ be the canonical
generating set of $C^* ( E \times_c G , \mathcal{L}_\mathbf{1} )$. Then the map
$\phi :  C^*(E \times_c G, \mathcal{L}_\mathbf{1} ) \to C^*(E, \mathcal{L})
\times_{\delta} G $
given by
\[
\phi ( s_{(a,g)} ) = j_{C^* ( E , \mathcal{L} )} ( s_a ) j_G ( \chi_{C(a)^{-1}}
) \quad
\phi ( p_{(r(\beta),g)}) = j_{C^* ( E , \mathcal{L} )} ( p_{r ( \beta )} ) j_G (
\chi_{g^{-1}} )
\]

\noindent is an isomorphism.
\end{thm}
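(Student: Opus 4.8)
The plan is to follow the standard crossed-product-by-coaction strategy, as in \cite[Theorem 2.4]{kqr}, in three stages: (1) verify that the proposed images $\phi(s_{(a,g)})$ and $\phi(p_{(r(\beta),g)})$ form a Cuntz--Krieger $(E\times_c G,\mathcal{L}_\mathbf{1})$-family in $M(C^*(E,\mathcal{L})\times_\delta G)$, so that $\phi$ exists as a homomorphism by the universal property; (2) construct a candidate inverse $\psi$; (3) show $\phi$ and $\psi$ are mutually inverse, or alternatively invoke a gauge-invariant uniqueness argument to conclude $\phi$ is injective and surjective.

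For stage (1), the key point is that the crossed product $C^*(E,\mathcal{L})\times_\delta G$ is, by the discrete-coaction picture of Quigg \cite{Q}, generated by $j_{C^*(E,\mathcal{L})}(C^*(E,\mathcal{L}))$ together with the commuting family $\{j_G(\chi_g):g\in G\}$ of projections satisfying $j_G(\chi_g)j_G(\chi_h)=\delta_{g,h}j_G(\chi_g)$, $\sum_g j_G(\chi_g)=1$ strictly, and the covariance relation $j_{C^*(E,\mathcal{L})}(x)\,j_G(\chi_g) = j_G(\chi_{g'})\,j_{C^*(E,\mathcal{L})}(x)$ for $x$ homogeneous of degree $g'g^{-1}$ under $\delta$; in particular, since $\delta(s_a)=s_a\otimes u_{C(a)}$ and $\delta(p_{r(\beta)})=p_{r(\beta)}\otimes u_{1_G}$, the element $j_{C^*(E,\mathcal{L})}(s_a)$ is homogeneous of degree $C(a)$ and $j_{C^*(E,\mathcal{L})}(p_{r(\beta)})$ of degree $1_G$. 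One then checks each relation (CK1a)--(CK4) for the labeled graph $(E\times_c G,\mathcal{L}_\mathbf{1})$: (CK3) reduces to $s_a^*s_a=p_{r(a)}$ in $C^*(E,\mathcal{L})$ together with orthogonality of the $\chi$-projections and the identity $r(a,g)=(r(a),gC(a))$ from Lemma~\ref{helpCK} (which forces the correct index $gC(a)^{-1}\cdot C(a)=g$ to match up); (CK2) uses the covariance relation to slide $j_G(\chi)$ past $j_{C^*(E,\mathcal{L})}(p_{r(\beta)})$ and the identity $r(\beta a,g)=(r(\beta a),gC(\beta a))$; (CK1b) is immediate from the corresponding relation downstairs tensored with a fixed $\chi$-projection, using that $\mathcal{B}=\mathcal{E}(r,\mathcal{L})\times G$ is closed under relative complements because $\mathcal{E}(r,\mathcal{L})$ is; and (CK4), the summation relation, uses $\mathcal{L}^1_{(r(\beta),g)}$ (under the identification of Lemma~\ref{lem:splabels}) being in bijection with $\mathcal{L}^1_{r(\beta)}$ together with the defining relation \eqref{eq:relation} downstairs multiplied by $j_G(\chi_{g^{-1}})$ and the telescoping identity $\sum_{g}j_G(\chi_g)=1$ where needed. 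One also needs the $P_A\neq 0$ hypothesis of Theorem~\ref{GIUT}: here $\phi(p_{(r(\beta),g)}) = j_{C^*(E,\mathcal{L})}(p_{r(\beta)})j_G(\chi_{g^{-1}})$ is nonzero because $p_{r(\beta)}\neq 0$ in $C^*(E,\mathcal{L})$, $j_{C^*(E,\mathcal{L})}$ and $j_G$ are injective, and $\chi_{g^{-1}}$ is homogeneous-compatible so its product with the degree-$1_G$ element $j_{C^*(E,\mathcal{L})}(p_{r(\beta)})$ survives; more carefully, nonvanishing on general $A\in\mathcal{E}(r,\mathcal{L})\times G$ follows from the inclusion/exclusion description of $\mathcal{E}(r,\mathcal{L})$ in Remark~\ref{rem:erl} together with the ``$p_{r(\alpha)}-p_{r(\beta)}\neq 0$'' part of (CK1b) downstairs.

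For stages (2)--(3), I would define $\psi : C^*(E,\mathcal{L})\times_\delta G \to C^*(E\times_c G,\mathcal{L}_\mathbf{1})$ on the two generating families by $\psi(j_{C^*(E,\mathcal{L})}(s_a)) = \sum_{g\in G} s_{(a,g)}$ and $\psi(j_{C^*(E,\mathcal{L})}(p_{r(\beta)})) = \sum_{g\in G} p_{(r(\beta),g)}$ (strictly convergent sums in the multiplier algebra) and $\psi(j_G(\chi_g)) = \sum_{v} p_{(v,g^{-1})}$-type sums over a suitable range-projection family — in fact the cleanest route is to produce $\psi$ from the universal property of the crossed product by exhibiting a covariant pair: a homomorphism of $C^*(E,\mathcal{L})$ and a $\delta$-covariant unitary-valued (projection-valued) map of $C^*(G)$ into $M(C^*(E\times_c G,\mathcal{L}_\mathbf{1}))$ that together implement $\delta$. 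Then $\psi\circ\phi = \mathrm{id}$ and $\phi\circ\psi = \mathrm{id}$ are checked on generators by direct computation using Lemma~\ref{helpCK} to keep the $G$-bookkeeping straight. Alternatively, having built $\phi$, one proves injectivity via Theorem~\ref{GIUT}: $\phi$ intertwines the gauge action on $C^*(E\times_c G,\mathcal{L}_\mathbf{1})$ with the action on $C^*(E,\mathcal{L})\times_\delta G$ coming from (gauge on $C^*(E,\mathcal{L})$)$\,\times\,$(trivial on $C^*(G)$), which is strongly continuous and compatible, and the $P_A\neq 0$ condition was verified above; surjectivity follows since the range of $\phi$ contains $j_{C^*(E,\mathcal{L})}(s_a) = \phi(\sum_g s_{(a,g)})$ and $j_{C^*(E,\mathcal{L})}(p_{r(\beta)})$ and all $j_G(\chi_g)$, which generate. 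I expect the main obstacle to be stage (1), specifically verifying (CK4) and the nonvanishing of the $P_A$: both require carefully tracking how the index shift $g\mapsto gC(a)$ from Lemma~\ref{helpCK} interacts with the covariance relation of the coaction crossed product and with the inclusion/exclusion description of $\mathcal{E}(r,\mathcal{L})$, and this is where the label-consistency hypothesis on $c$ is genuinely used — without it the degree of $j_{C^*(E,\mathcal{L})}(s_a)$ would not be well-defined by the letter $a$ alone and the formula for $\phi(s_{(a,g)})$ would not even make sense.
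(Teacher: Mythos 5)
Your proposal is correct and follows essentially the same route as the paper's (sketched) proof: verify via Lemma~\ref{helpCK} that the proposed images form a Cuntz--Krieger $(E\times_c G,\mathcal{L}_\mathbf{1})$-family in the coaction crossed product (presented through Quigg's spectral-subspace picture), obtain $\phi$ from universality, deduce injectivity from Theorem~\ref{GIUT} using the compatible circle action $\gamma_z\times G$, and get surjectivity by exhibiting each generator $j_{C^*(E,\mathcal{L})}(s_a)j_G(\chi_g)$ and $j_{C^*(E,\mathcal{L})}(p_{r(\beta)})j_G(\chi_h)$ in the range. The explicit-inverse alternative you mention is unnecessary, and the paper instead records the generators of the crossed product individually rather than via the strictly convergent sums $\sum_g s_{(a,g)}$, but this is a cosmetic difference.
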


\begin{proof}[Sketch of proof]
For each $g \in G$, let $C^*(E, \mathcal{L})_g = \{b \in C^*(E, \mathcal{L}): \delta(b)=b \otimes u_g \}$ denote the corresponding spectral subspace; we write $b_g$ to denote a generic element of $C^*(E, \mathcal{L})_g$.  Then $C^*(E, \mathcal{L}) \times_{\delta} G$ is densely spanned by the set $\{(b_g, h): b_g \in C^*(E, \mathcal{L})_g \text{ and }g,h \in G\}$, and the algebraic operations are given on this set by
\begin{equation*}
(b_g, x)(b_h, y) = (b_g b_h, y) \text{ if } y=h^{-1}x \text{
(and $0$ if not), and } (b_g, x)^*= (b_g^*, gx).
\end{equation*}

\noindent
If $(j_{C^*(E, \mathcal{L})}, j_G)$ denotes the canonical covariant homomorphism of $C^*(E, \mathcal{L})$
into  the multiplier algebra of $C^*(E, \mathcal{L}) \times_{\delta} G$, then $(b_g, x)$ is by definition
$(j_{C^*(E, \mathcal{L})}(b_g) j_G(\chi_{\{x\}}))$.

Using Lemma \ref{helpCK} we may show that  for $(a,g) \in \mathcal{A} \times G$, $\beta \in \mathcal{L}^+ (E)$ and $g \in G$
\begin{equation*}
t_{(a,g)}= (s_a, C(a)^{-1}g^{-1}) \text{ and } q_{(r(\beta),g)}= (p_{r(\beta)}, g^{-1})
\end{equation*}

\noindent is a  Cuntz-Krieger $(E \times_c G, \mathcal{L}_\mathbf{1})$-family in $C^*(E, \mathcal{L}) \times_{\delta} G$.

By universality of $C^*(E \times_c G, \mathcal{L}_\mathbf{1})$ there is a homomorphism $\pi_{t,q}$ from $C^*(E \times_c G, \mathcal{L}_\mathbf{1})$ to $C^*(E, \mathcal{L}) \times_{\delta} G$ such that $\pi_{t,q} (s_{(a,g)}) = t_{(a,g)}$ and $\pi_{t,q} (p_{(r(\beta),g)})= q_{(r(\beta),g)}$ which we may show is injective using the argument from \cite[Theorem 2.4]{kqr} and Theorem~\ref{GIUT}.

Next we show that $\pi_{t,q}$ is surjective.  Observe that $C^*(E, \mathcal{L}) \times_{\delta} G$ is generated by $(s_a, g)$ and $(p_{r(\beta)}, h)$.  Since $\pi_{t,q}(s_{(a, g^{-1} C(a)^{-1})})= t_{(a, g^{-1} C(a)^{-1})}= (s_a, C(a)^{-1} C(a) g)$,
and $\pi_{t,q}(p_{(r(\beta), h^{-1})})= (p_{r(\beta)}, h)$ we see that $\pi_{t,q}$ is surjective.  Hence $\pi_{t,q}$ is the desired isomorphism.

We need to check that $\pi_{t,q}$ is equivariant for the $G$ actions, that is $\pi_{t,q} \circ \tau_g= \widehat{\delta_g} \circ \pi_{t,q}$ for all $g \in G$.  It is enough to check on generators: Notice that for all $s_{(a,h)} \in C^*(E \times_c G, \mathcal{L}_\mathbf{1} )$
\[
\pi_{t,q} \circ \tau_g (s_{(a,h)})= \pi_{t,q}(s_{(a,gh)})=(s_a, C(a)^{-1} h^{-1} g^{-1})= \widehat{\delta}_g (s_a, C(a)^{-1} h^{-1})= \widehat{\delta_g} \circ \pi_{t,q}(s_{(a,h)})
\]

\noindent
and similarly $\pi_{t,q} \circ \tau_g (p_{(r(\beta), h)}) =\widehat{\delta_g} \circ \pi_{t,q}(p_{(r(\beta), h)})$ for $p_{(r(\beta), h)} \in C^*(E \times_c G, \mathcal{L}_\mathbf{1} )$.

We claim that $\pi_{t,q}$ is equivariant for the $\mathbb{T}$ actions, that is  $\pi_{t,q} \circ \gamma_z = (\gamma_z \times G) \circ \pi_{t,q}$ for all $z \in \mathbb{T}$.  It is enough to check this on generators:  Notice that for all $s_{(a,h)} \in C^*(E \times_c G, \mathcal{L}_\mathbf{1} )$ and $z \in \mathbb{T}$ we have
\begin{align*}
\pi_{t,q} \circ \gamma_z (s_{(a,h)})= \pi_{t,q}(z s_{(a,h)})=(z s_a, C(a)^{-1} h^{-1}) &= (\gamma_z \times G)(s_a, C(a)^{-1} h^{-1}) \\
& =(\gamma_z \times_{\delta} G) \circ \pi_{t,q}(s_{(a,h)}) .
\end{align*}

\noindent
Similarly $\pi_{t,q} \circ \gamma_z (p_{(r(\beta), h)})=(\gamma_z \times G) \circ \pi_{t,q}(p_{(r(\beta), h)})$ for all $p_{(r(\beta), h)} \in C^*(E \times_c G, \mathcal{L}_\mathbf{1} )$.  
\end{proof}

\begin{cor} \label{lciso}
Let  $(E, \mathcal{L})$ be a weakly left-resolving, set-finite labeled graph. Suppose that $G$ is a discrete group, $c: E^1 \to
G$ be a label consistent function, and  $\tau$ the induced action of $G$ on
$C^*(E \times_c G, \mathcal{L}_\mathbf{1} )$.  Then
\[
C^*(E \times_c G, \mathcal{L}_\mathbf{1} ) \times_{\tau, r} G \cong C^*(E, \mathcal{L}) \otimes \mathcal{K}(\ell^2(G)).
\]
\end{cor}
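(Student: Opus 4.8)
\emph{Proof proposal.} The plan is to obtain this as a formal consequence of Theorem~\ref{thm:skewiscoact} together with Imai--Takai duality for coactions. Recall that Theorem~\ref{thm:skewiscoact} provides an isomorphism $\phi : C^*(E \times_c G, \mathcal{L}_\mathbf{1}) \to C^*(E,\mathcal{L}) \times_\delta G$, and the equivariance computation carried out at the end of its proof shows that $\phi$ intertwines the translation action $\tau$ of Definition~\ref{flgadef} with the dual action $\widehat{\delta}$ of $G$ on $C^*(E,\mathcal{L}) \times_\delta G$, i.e. $\phi \circ \tau_g = \widehat{\delta}_g \circ \phi$ for all $g \in G$. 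So the only work is to feed this equivariant isomorphism through the standard duality machinery.

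First I would use functoriality of the reduced crossed product under equivariant isomorphisms: since $\phi$ is a $G$-equivariant $*$-isomorphism from $(C^*(E \times_c G, \mathcal{L}_\mathbf{1}), \tau)$ onto $(C^*(E,\mathcal{L}) \times_\delta G, \widehat{\delta})$, it induces an isomorphism
\[
C^*(E \times_c G, \mathcal{L}_\mathbf{1}) \times_{\tau, r} G \;\cong\; \bigl(C^*(E,\mathcal{L}) \times_\delta G\bigr) \times_{\widehat{\delta}, r} G .
\]
Next I would invoke the Imai--Takai duality theorem: because the coaction $\delta$ furnished by Proposition~\ref{coact} is normal, the right-hand side is canonically isomorphic to $C^*(E,\mathcal{L}) \otimes \mathcal{K}(\ell^2(G))$. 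Composing the two isomorphisms yields the stated result.

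Since Theorem~\ref{thm:skewiscoact} already does the substantive work, there is no genuine obstacle here; the only points requiring care are bookkeeping ones. One must confirm that the $G$-action on $C^*(E \times_c G, \mathcal{L}_\mathbf{1})$ appearing in the statement is exactly the action $\tau$ for which equivariance of $\phi$ was verified, and one must make sure the hypotheses of the chosen formulation of Imai--Takai duality are met --- here normality of $\delta$, which is precisely what Proposition~\ref{coact} guarantees (maximality would allow the alternative full-crossed-product version). With those checks in place the corollary follows immediately.
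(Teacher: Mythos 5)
Your proposal is correct and follows essentially the same route as the paper: pass the equivariant isomorphism of Theorem~\ref{thm:skewiscoact} through the reduced crossed product and then apply crossed-product duality for the (normal) coaction $\delta$. The only cosmetic difference is that the duality result you want here is usually attributed to Katayama (the paper cites \cite{k}, following \cite[Corollary 2.5]{kqr}) rather than Imai--Takai, but the statement you invoke is the right one.
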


\begin{proof}
Since the isomorphism of $C^*(E \times_c G, \mathcal{L}_\mathbf{1})$ with $C^*(E, \mathcal{L}) \times_{\delta} G$ is equivariant for the $G$-actions $\tau, \widehat{\delta}$, respectively, it follows that
\[
C^*(E \times_c G, \mathcal{L}_\mathbf{1}) \times_{\tau, r} G \cong C^*(E, \mathcal{L}) \times_{\delta} G \times_{\widehat{\delta}, r} G.
\]

\noindent
Following the argument in \cite[Corollary 2.5]{kqr}, Katayama's duality theorem \cite{k} gives us that $C^*(E, \mathcal{L}) \times_{\delta} G \times_{\widehat{\delta}, r} G$ is isomorphic to $C^*(E, \mathcal{L}) \otimes \mathcal{K}(\ell^2(G))$,  as required.
\end{proof}

\noindent In order to provide a version of Corollary~\ref{lciso} for group actions we must first
characterise when the functions $c,d$ in the Gross-Tucker Theorem~\ref{gttlg} are label consistent maps. We will do this in the next section.

Recall from \cite[p.209]{Q} that a coaction $\delta$ of a discrete group $G$ on a $C^*$-algebra $A$ is \textit{saturated} if for each $s \in G$ we have $\overline{A_s A_s^*} = A^\delta$ where $A_s$ is the spectral subspace $A_s = \{b \in A : \delta(b)=b \otimes u_s \}$ and $A^\delta$ is  the fixed point algebra for $\delta$ 
\[
A^\delta := \{ b \in A : \delta (a) = a \otimes u_{1_G} \} .
\]

\begin{lem} \label{lem:saturated}
Let $( E , \mathcal{L} )$ be a weakly left-resolving, set-finite labeled graph and $c : E^1 \to \mathbb{Z}$ be given by $c(e) = 1$ for all $e \in E^1$. Then the coaction $\delta$ of $\mathbb{Z}$ on $C^* ( E  , \mathcal{L} )$ induced by $c$ is saturated.
\end{lem}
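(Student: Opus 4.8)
The plan is to work directly with the spectral subspaces of $\delta$. Because $c(e)=1_{\mathbb Z}$ for every $e$, the function $C$ of
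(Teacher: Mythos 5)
Your proposal breaks off after announcing the strategy ("work directly with the spectral subspaces") and beginning to observe that the label-consistency function $C$ is constant; no actual argument is present, so there is nothing here that establishes saturation. The statement to be proved is that $\overline{A_s A_s^*}=A^\delta$ for every $s\in\mathbb{Z}$, where $A_s=\{b : \delta(b)=b\otimes u_s\}$, and your fragment never engages with either side of that identity.

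To complete the argument along the lines you started, you would need at least the following three ingredients, all of which appear (in compressed form) in the paper's proof. First, identify the fixed point algebra: since $C\equiv 1$, the coaction $\delta$ is the Fourier transform of the gauge action $\gamma$, so $A^\delta=C^*(E,\mathcal{L})^\gamma=\clsp\{s_\alpha p_B s_\beta^*:\alpha,\beta\in\mathcal{L}^n(E),\ B\in\mathcal{E}(r,\mathcal{L})\}$, and more generally $A_s=\clsp\{s_\alpha p_B s_\beta^*: |\alpha|-|\beta|=s\}$. Second, the containment $\overline{A_sA_s^*}\subseteq A^\delta$ is immediate from the grading, so the real work is the reverse inclusion. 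Third, and this is the step your plan gives no hint of, one must use the standing hypothesis that $E$ is essential (in particular has no sinks, $\#s^{-1}(v)\ge 1$) together with relation (CK4), which for $s>0$ lets one expand $p_B=\sum_{a\in\mathcal{L}^1_B}s_ap_{r(B,a)}s_a^*$ repeatedly and write any generator $s_\alpha p_B s_\beta^*$ of $A^\delta$ as a finite sum of elements of the form $(s_{\alpha\mu}p_{B'}s_\nu^*)(s_{\beta\mu'}p_{B''}s_{\nu'}^*)^*$ with the length bookkeeping placing each factor in $A_s$; this is the analogue of the argument in Pask--Raeburn that the paper cites. Without the no-sinks hypothesis this step fails, which is why it must appear explicitly. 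As it stands, your submission is a statement of intent rather than a proof.
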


\begin{proof}
The coaction $\delta$ of $\mathbb{Z}$ on $C^* ( E , \mathcal{L} )$  defined in Proposition~\ref{coact} is such that
the fixed point algebra $C^* (E , \mathcal{L} )^\delta$ is precisely the fixed point algebra $C^* ( E , \mathcal{L} )^\gamma$ for the canonical gauge action of $\mathbb{T}$ on $C^* ( E , \mathcal{L}  )$ by the Fourier transform (cf.\ \cite[Corollary 4.9]{C}. By an argument similar to that in \cite[\S 2]{pr} we have
\[
C^* ( E , \mathcal{L} )^\gamma = \clsp \{ s_\alpha p_A s_\beta^* : \alpha , \beta \in \mathcal{L}^n (E ) ,  A \in \mathcal{E} ( r , \mathcal{L} ) \} .
\]

\noindent Since $E$ has no sinks it follows by a similar argument to that in \cite[Lemma 4.1.1]{pr} that $C^* (E , \mathcal{L} )$ is saturated.
\end{proof}

\begin{thm} \label{thm:fpa}
Let $( E , \mathcal{L} )$ be a weakly left-resolving, set-finite labeled graph. Then $C^* ( E , \mathcal{L} )^\gamma$  is strongly Morita equivalent to $C^* ( E \times_c  \mathbb{Z} , \mathcal{L}_\mathbf{1} )$
where $c : E^1 \to \mathbb{Z}$ is given by $c(e) = 1$ for all $e \in E^1$.
\end{thm}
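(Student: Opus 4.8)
The plan is to reduce the statement to the coaction crossed product already identified in Theorem~\ref{thm:skewiscoact} and then run a full-corner argument powered by the saturation in Lemma~\ref{lem:saturated}. Write $A = C^*(E,\mathcal{L})$ and let $\delta$ be the coaction of $\mathbb{Z}$ on $A$ given by Proposition~\ref{coact} for the label consistent function $c\equiv 1$ (so the corresponding $C:\mathcal{A}\to\mathbb{Z}$ is the constant map $1$, and $\delta(s_a)=s_a\otimes u_1$, $\delta(p_{r(\beta)})=p_{r(\beta)}\otimes u_0$). Applying Theorem~\ref{thm:skewiscoact} with $G=\mathbb{Z}$ gives an isomorphism
\[
C^*(E\times_c\mathbb{Z},\mathcal{L}_\mathbf{1})\;\cong\;A\times_\delta\mathbb{Z}.
\]
So it suffices to show that $A\times_\delta\mathbb{Z}$ is strongly Morita equivalent to $A^\delta$, since the proof of Lemma~\ref{lem:saturated} identifies $A^\delta$ with the gauge fixed point algebra $A^\gamma=C^*(E,\mathcal{L})^\gamma$.

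To get this Morita equivalence I would exhibit $A^\delta$ as a full corner of $A\times_\delta\mathbb{Z}$. Let $(j_A,j_G)$ be the canonical covariant pair into $M(A\times_\delta\mathbb{Z})$ and set $p=j_G(\chi_{\{0\}})$, a projection in the multiplier algebra. Using the spectral-subspace picture recalled in the sketch of Theorem~\ref{thm:skewiscoact} --- generic elements $(b_m,x)$ with $b_m$ in the spectral subspace $A_m$, multiplication $(b_m,x)(b_n,y)=(b_mb_n,y)$ when $y=x-n$, and $A_0=A^\delta$ --- one checks that $p(A\times_\delta\mathbb{Z})p=\clsp\{(b_0,0):b_0\in A_0\}$ and that $b_0\mapsto(b_0,0)=j_A(b_0)j_G(\chi_{\{0\}})$ is a $*$-isomorphism of $A^\delta$ onto this corner (injectivity uses faithfulness of $j_A$ and the fact that the dual action translates the copies $(b_0,n)$ onto one another). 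Next, the closed ideal $\overline{(A\times_\delta\mathbb{Z})\,p\,(A\times_\delta\mathbb{Z})}$ is the closed span of the products $(b_m,0)(c_n,-n)=(b_mc_n,-n)$ with $b_m\in A_m$ and $c_n\in A_n$; matching the degree $m+n$ against a target element $(a_k,x)$ shows that this ideal is all of $A\times_\delta\mathbb{Z}$ precisely when $\overline{A_mA_n}=A_{m+n}$ for all $m,n$, i.e.\ precisely when $\delta$ is saturated. By Lemma~\ref{lem:saturated} the coaction $\delta$ is saturated, so $p$ is a full projection and the standard full-corner imprimitivity bimodule yields
\[
A\times_\delta\mathbb{Z}\ \sim_{\mathrm{M}}\ p(A\times_\delta\mathbb{Z})p\ \cong\ A^\delta\ =\ C^*(E,\mathcal{L})^\gamma .
\]
Alternatively, one could simply quote the general principle that the crossed product of a saturated coaction of a discrete group by that group is strongly Morita equivalent to its fixed point algebra (cf.\ \cite{Q}).

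The step I expect to need the most care is the fullness of $p=j_G(\chi_{\{0\}})$: the whole argument hinges on converting the density of $\overline{(A\times_\delta\mathbb{Z})\,p\,(A\times_\delta\mathbb{Z})}$ into the Fell-bundle identities $\overline{A_mA_n}=A_{m+n}$, and this is exactly where Lemma~\ref{lem:saturated} --- and through it the absence of sinks in $E$ together with the spanning description of $C^*(E,\mathcal{L})^\gamma$ --- enters. The remaining ingredients (the isomorphism with the coaction crossed product, the identification of the corner with $A^\delta$, and the equality $A^\delta=A^\gamma$ coming from the Fourier transform) are routine given Proposition~\ref{coact}, Theorem~\ref{thm:skewiscoact} and Lemma~\ref{lem:saturated}.
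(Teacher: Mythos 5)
Your proof is correct and follows essentially the same route as the paper: identify $C^*(E\times_c\mathbb{Z},\mathcal{L}_\mathbf{1})$ with $C^*(E,\mathcal{L})\times_\delta\mathbb{Z}$ via Theorem~\ref{thm:skewiscoact}, then invoke saturation (Lemma~\ref{lem:saturated}) together with the identification $C^*(E,\mathcal{L})^\delta\cong C^*(E,\mathcal{L})^\gamma$. The only difference is that you spell out the full-corner argument behind ``saturated coaction implies the crossed product is Morita equivalent to the fixed point algebra,'' a step the paper leaves implicit since it is precisely the content of saturation in \cite{Q}.
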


\begin{proof}
Since $c$ is label consistent it follows by Theorem~\ref{thm:skewiscoact} that
\[
C^* ( E \times_c  \mathbb{Z} , \mathcal{L}_\mathbf{1} ) \cong C^* ( E , \mathcal{L} ) \times_\delta \mathbb{Z}  .
\]

\noindent By Lemma~\ref{lem:saturated} the coaction is $\delta$ is saturated and since
$C^* (E , \mathcal{L} )^\delta \cong C^* ( E , \mathcal{L} )^\gamma$ the result follows.
\end{proof}

\section{Free group actions on labeled graphs} \label{sec:lcg}

\noindent In this section we examine conditions on the free labeled graph
action $((E, \mathcal{L}), G, \alpha)$ which ensure that the functions
$c, d$ from Theorem~\ref{gttlg} are label consistent.

Recall that a fundamental domain for a graph action $(E, G, \alpha)$ is a subset $T$ of $E^0$ such that for every $v \in E^0$ there exists $g \in G$ and a unique $w \in T$ such that $v = \alpha^0_g w$. Every free graph action has a fundamental domain.

\begin{defi} \label{lgfd}
Let $((E, \mathcal{L}), G, \alpha)$ be a free labeled graph action.  A
\emph{fundamental domain for $((E, \mathcal{L}), G, \alpha)$} is a fundamental
domain $T \subseteq E^0$ for  the restricted graph action such that for every $e,f \in E^1$ we have
 \begin{enumerate}[(a)]
   \item  if $r(e), r(f) \in T$ and $G \mathcal{L}(e)= G \mathcal{L}(f)$, then
$\mathcal{L}(e)= \mathcal{L}(f)$ and
   \item  if $s(e), s(f) \in T$ and $G \mathcal{L}(e)= G \mathcal{L}(f)$, then
$\mathcal{L}(e)= \mathcal{L}(f)$.
 \end{enumerate}
\end{defi}

\noindent In Examples~\ref{exams:fd} (i) below we see that not every free action of a group on a labeled graph has a fundamental domain.

\begin{exams} \label{exams:fd}
\begin{enumerate}[(i)]
\item Consider the following labeled graph
 \[\begin{tikzpicture}
    \node at (-2 , 0 ) {$(E , \mathcal{L} ):=$};
     \node[inner sep=0pt, circle, fill=black] (11) at (0, 1) [draw] {.};
    \node[inner sep=3pt, anchor = south] at (11.north) {$\scriptstyle (v,-1)$};
     \node[inner sep=0pt, circle, fill=black] (12) at (0, -1) [draw] {.};
    \node[inner sep=3pt, anchor = north] at (12.south) {$\scriptstyle (w,-1)$};
     \node[inner sep=0pt, circle, fill=black] (21) at (2, 1) [draw] {.};
    \node[inner sep=3pt, anchor = south] at (21.north) {$\scriptstyle (v,0)$};
     \node[inner sep=0pt, circle, fill=black] (22) at (2, -1) [draw] {.};
    \node[inner sep=3pt, anchor = north] at (22.south) {$\scriptstyle (w,0)$};
     \node[inner sep=0pt, circle, fill=black] (31) at (4, 1) [draw] {.};
    \node[inner sep=3pt, anchor = south] at (31.north) {$\scriptstyle (v,1)$};
     \node[inner sep=0pt, circle, fill=black] (32) at (4, -1) [draw] {.};
    \node[inner sep=3pt, anchor = north] at (32.south) {$\scriptstyle (w,1)$};
     \node[inner sep=0pt, circle, fill=black] (41) at (6, 1) [draw] {.};
    \node[inner sep=3pt, anchor = south] at (41.north) {$\scriptstyle (v,2)$};
     \node[inner sep=0pt, circle, fill=black] (42) at (6, -1) [draw] {.};
    \node[inner sep=3pt, anchor = north] at (42.south) {$\scriptstyle (w,2)$};
    \node at (7, 1) {$\dots$};
    \node at (7, -1) {$\dots$};
    \node at (-1, 1) {$\dots$};
    \node at (-1, -1) {$\dots$};
    \draw[style=semithick, -latex] (11.east)--(21.west) node[pos=0.5,
anchor=south, inner sep=1pt]{$\scriptstyle (1,-1)$};
    \draw[style=semithick, -latex] (11.south east)--(22.north west)node[pos=0.2,
circle, anchor=north east, inner sep=0pt]{$\scriptstyle (0,-1)$};
    \draw[style=semithick, -latex] (12.north east)--(21.south east)
node[pos=0.2, circle, anchor=south east, inner sep=0pt]{$\scriptstyle (0,-2)$};
   \draw[style=semithick, -latex] (21.east)--(31.west) node[pos=0.5,
anchor=south, inner sep=1pt]{$\scriptstyle (1,0)$}
   node[pos=0.5, anchor=north, inner sep=1pt]{$\scriptstyle f$};
    \draw[style=semithick, -latex] (21.south east)--(32.north west)
node[pos=0.2, circle, anchor=north east, inner sep=0pt]{$\scriptstyle (0,0)$};
    \draw[style=semithick, -latex] (22.north east)--(31.south east)
node[pos=0.2, circle, anchor=south east, inner sep=0pt]{$\scriptstyle (0,-1)$};
   \draw[style=semithick, -latex] (31.east)--(41.west) node[pos=0.5,
anchor=south, inner sep=1pt]{$\scriptstyle (1,1)$};
    \draw[style=semithick, -latex] (31.south east)--(42.north west)
node[pos=0.2, circle, anchor=north east, inner sep=0pt]{$\scriptstyle (0,1)$};
    \draw[style=semithick, -latex] (32.north east)--(41.south east)
node[pos=0.2, circle, anchor=south east, inner sep=0pt]{$\scriptstyle (0,0)$};
    \draw[style=semithick, -latex] (12.south)--(22.south) node[pos=0.5,
anchor=north, inner sep=1pt]{$\scriptstyle (1,1)$};
     \draw[style=semithick, -latex] (22.south)--(32.south) node[pos=0.5,
anchor=north, inner sep=1pt]{$\scriptstyle (1,2)$};
     \draw[style=semithick, -latex] (32.south)--(42.south) node[pos=0.5,
anchor=north, inner sep=1pt]{$\scriptstyle (1,3)$}
     node[pos=0.5, anchor=south, inner sep=1pt]{$\scriptstyle e$};
\end{tikzpicture}
\]

\noindent
The group $\mathbb{Z}$ acts freely on $(E,\mathcal{L})$ by addition in the
second coordinate of the vertices, edges and labels
as indicated in the picture above; call this action $\alpha$.
Let $T= \{(v,0), (w,1)\}$, then $T$ is a fundamental domain for  the restricted
graph action $(E, \mathbb{Z},\alpha )$.
However when considering the labeled graph action  $((E,\mathcal{L}), \mathbb{Z}
,\alpha)$ the set $T$ does not satisfy Definition~\ref{lgfd} (b). 

Consider the
edges $e,f$ as shown above with $\mathcal{L} (e)=(1,3)$ and
$\mathcal{L}(f)=(1,0)$ respectively. We have $s(e)=(w,1) \in T$ and $s(f)=(v,0)
\in T$ and $\mathbb{Z} \mathcal{L} (e) = \mathbb{Z} \mathcal{L} (f) = \{ (1,n) :
n \in \mathbb{Z} \}$, however $\mathcal{L} (e)= (1,3) \neq (1,0) =
\mathcal{L}(f)$. Indeed any fundamental domain for the restricted action $(E,
\mathbb{Z},\alpha )$ will also fail Definition~\ref{lgfd} (b).

\item Let $c,d:E^1 \to G$ be label consistent functions and $((E \times_c G, \mathcal{L}_d), G, \tau)$ be the associated left labeled graph translation action.  Then one checks that $T= \{(v, 1_G): v \in E^0 \}$ is a fundamental domain for $((E \times_c G, \mathcal{L}_d), G, \tau)$.
\end{enumerate}
\end{exams}

\noindent  The following result shows that when we add the
fundamental domain hypothesis to the free labeled graph action, the
functions $c,d : (E/G)^1 \to G$ in the labeled graph version of the Gross-Tucker
Theorem (Theorem~\ref{gttlg}) may be chosen to be label consistent.

\begin{thm} \label{gttlc}
Let $( (E , \mathcal{L} ) , G , \alpha )$ be a free labeled graph action with a fundamental domain. Then there are label consistent functions $c,d : (E/G)^1 \to G$ such that 
$( ( E , \mathcal{L} ) , G , \alpha ) \cong ( (E/G) \times_c G , ( \mathcal{L} / G)_d ) , G , \tau )$.
\end{thm}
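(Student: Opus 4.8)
The plan is to run the Gross--Tucker construction of Theorem~\ref{gttlg}, but with the vertex section chosen to take values in a fundamental domain; one then checks that the functions $c,d$ it produces are automatically label consistent, i.e.\ constant on each fibre of $\mathcal{L}/G$.

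First I would fix a fundamental domain $T\subseteq E^0$ for $((E,\mathcal{L}),G,\alpha)$ and define $\eta^0\colon(E/G)^0\to E^0$ by letting $\eta^0(Gv)$ be the unique element of $T\cap Gv$; this is a section for $q^0$ whose image lies in $T$. Lemma~\ref{eta1} then furnishes the section $\eta^1$ for $q^1$ with $s(\eta^1(Ge))=\eta^0(s(Ge))$, so every lifted edge $\eta^1(Ge)$ has its source in $T$. Choosing any section $\eta^{\mathcal{A}}$ for $q^{\mathcal{A}}$ and applying Theorem~\ref{gttlg} yields $c,d\colon(E/G)^1\to G$ together with the equivariant labeled graph isomorphism $((E,\mathcal{L}),G,\alpha)\cong(((E/G)\times_c G,(\mathcal{L}/G)_d),G,\tau)$. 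What remains is to verify that these particular $c$ and $d$ depend only on the label orbit.

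So suppose $(\mathcal{L}/G)(Ge)=(\mathcal{L}/G)(Gf)$, equivalently $G\mathcal{L}(e)=G\mathcal{L}(f)$. Since $q^1(\eta^1(Ge))=Ge$ we have $G\mathcal{L}(\eta^1(Ge))=(\mathcal{L}/G)(Ge)$, and likewise for $f$; as $\eta^1(Ge)$ and $\eta^1(Gf)$ both have source in $T$, Definition~\ref{lgfd}(b) forces $\mathcal{L}(\eta^1(Ge))=\mathcal{L}(\eta^1(Gf))=:a$. Feeding this into the characterisation \eqref{chardeta} of $d$, in which the factor $\eta^{\mathcal{A}}((\mathcal{L}/G)(Ge))$ now agrees for $e$ and $f$, and invoking freeness of the $G$-action on $\mathcal{A}$, we obtain $d(Ge)=d(Gf)$. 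For $c$, recall that $c(Ge)$ is the unique element with $r(\eta^1(Ge))=\alpha^0_{c(Ge)}\eta^0(r(Ge))$; hence the translated edge $\alpha^1_{c(Ge)^{-1}}(\eta^1(Ge))$ has range $\eta^0(r(Ge))\in T$ and, by \eqref{compatible}, label $\alpha^{\mathcal{A}}_{c(Ge)^{-1}}(a)$, and similarly for $f$. These two edges have range in $T$ and the same label orbit $Ga$, so Definition~\ref{lgfd}(a) gives $\alpha^{\mathcal{A}}_{c(Ge)^{-1}}(a)=\alpha^{\mathcal{A}}_{c(Gf)^{-1}}(a)$, and freeness on $\mathcal{A}$ again yields $c(Ge)=c(Gf)$. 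Thus $c$ and $d$ factor through $\mathcal{L}/G$, which is what we wanted.

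The only non-routine point is the argument for $c$: condition (a) of a fundamental domain constrains edges whose \emph{range} lies in $T$, whereas the lifted edges $\eta^1(Ge)$ have their \emph{source} in $T$, so one must first translate $\eta^1(Ge)$ by $c(Ge)^{-1}$ to bring its range into $T$ before condition (a) is applicable. The remaining verifications are bookkeeping with identities already established in the proof of Theorem~\ref{gttlg}, together with the two freeness hypotheses.
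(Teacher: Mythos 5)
Your proposal is correct and follows essentially the same route as the paper's proof: choose $\eta^0$ to land in the fundamental domain, run Theorem~\ref{gttlg}, use Definition~\ref{lgfd}(b) on the source-in-$T$ lifts to get label consistency of $d$, and translate the lifted edges to bring their ranges into $T$ so that Definition~\ref{lgfd}(a) plus freeness on $\mathcal{A}$ gives label consistency of $c$. The one step you flag as non-routine (translating by $c(Ge)^{-1}$ before applying condition (a)) is exactly the manoeuvre the paper performs.
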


\begin{proof}
Let $T$ be a fundamental domain for $((E, \mathcal{L}), G, \alpha)$. For
every $Gv \in (E/G)^0$ there exists a unique $w \in
T$ such that $Gw= Gv$.  Hence if we define $\eta^0 (Gv)=w$, then $\eta^0:
(E/G)^0 \to T$ is a section for $q^0$. Then we may
define $\eta^1, c,d$, and $\eta^{\mathcal{A}}$  as in
Theorem~\ref{gttlg}.  

It suffices to show that $c$ and $d$ are
label consistent.  To see that $d$ is label consistent suppose $Ge,Gf \in (E/G)^1$ are such that $(\mathcal{L}/G)(Ge)=
(\mathcal{L}/G)(Gf)=Ga \in \mathcal{A}/G$.  Let $b=
\eta^{\mathcal{A}}(Ga) \in \mathcal{A}$, $d (Ge)=k \in G$, and
$d (Gf)= l \in G$.  Then by the definition of $d$ we have
\begin{align}
\mathcal{L}(\eta^1(Ge))&= \alpha^{\mathcal{A}}_k \eta^{\mathcal{A}}
(\mathcal{L}/G)(Ge)= \alpha^{\mathcal{A}}_k b \label{lc1}\\
\mathcal{L}(\eta^1(Gf))&= \alpha^{\mathcal{A}}_l \eta^{\mathcal{A}}
(\mathcal{L}/G)(Gf)= \alpha^{\mathcal{A}}_l b \label{lc2}.
\end{align}

\noindent
This implies that $G \mathcal{L}(\eta^1(Ge))= Ga= G \mathcal{L}(\eta^1(Gf))$
and so $\mathcal{L}(\eta^1(Ge))= \mathcal{L}(\eta^1(Gf))$ since
$s(\eta^1(Ge)), s(\eta^1(Gf)) \in T$.  From Equations~\eqref{lc1} and
~\eqref{lc2} we have $\alpha^{\mathcal{A}}_k b= \alpha^{\mathcal{A}}_l b$ and so
$k=l$ since the $G$ action on $\mathcal{A}$ is free.  Therefore $d$ is
label consistent.

To see that $c$ is label consistent suppose that $Ge,Gf \in (E/G)^1$  are such that $(\mathcal{L}/G)(Ge)=
(\mathcal{L}/G)(Gf)=Ga \in \mathcal{A}/G$, say.  Let $b=
\eta^{\mathcal{A}}(Ga) \in \mathcal{A}$, $c_{\eta}(Ge)=k \in G$, and
$c(Gf)= l \in G$.  Then by the definition of $c$ we have
\begin{align}
r(\eta^1(Ge))&= \alpha^0_k \eta^0 (r(Ge))\label{lcc1}\\
r(\eta^1(Gf))&= \alpha^0_l \eta^0 (r(Gf))\label{lcc2}.
\end{align}

\noindent
Then if we let $e= \alpha^1_{-k}(\eta^1(Ge))$ and $f=
\alpha^1_{-l}(\eta^1(Gf))$ we have $e,f \in E^1$ with $r(e)= \eta^0 (r(Ge)),
r(f)= \eta^0 (r(Gf)) \in T$ and $G \mathcal{L}(e)= G \mathcal{L}(f)$.  Since
$T$ is a fundamental domain we have $\mathcal{L}(e)= \mathcal{L}(f)$ and hence
$\alpha^{\mathcal{A}}_{-k}(\mathcal{L}(\eta^1(Ge)))= \mathcal{L}(e)=
\mathcal{L}(f)= \alpha^{\mathcal{A}}_{-l}(\mathcal{L}(\eta^1(Gf)))$.  Since
$\mathcal{L}(\eta^1(Ge))= \mathcal{L}(\eta^1(Gf))$ we can conclude that
$k=l$ as in the previous paragraph. Therefore $c$ is label consistent 
and our result is established.
\end{proof}

\begin{cor} \label{isomorphisms}
Let $( E , \mathcal{L} )$ be a weakly left-resolving, set-finite labeled graph. Suppose that
$((E, \mathcal{L}), G, \alpha)$ is a free labeled graph action which admits a
fundamental domain.  Then
\[
C^*(E, \mathcal{L}) \times_{\alpha, r} G \cong C^*(E/G, \mathcal{L}/G) \otimes
\mathcal{K}(\ell^2(G)).
\]
\end{cor}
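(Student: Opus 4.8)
The plan is to obtain this by chaining together the Gross--Tucker theorem for labeled graphs (Theorem~\ref{gttlc}) with the crossed-product identities of Section~\ref{coactions}, exactly paralleling the proof of the directed-graph statement \eqref{eq:kp}.

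First I would apply Theorem~\ref{gttlc}: since $((E,\mathcal{L}),G,\alpha)$ is free and admits a fundamental domain, there are \emph{label consistent} functions $c,d:(E/G)^1 \to G$ and an equivariant labeled graph isomorphism
\[
((E,\mathcal{L}),G,\alpha) \;\cong\; \bigl(((E/G)\times_c G,(\mathcal{L}/G)_d),\,G,\,\tau\bigr),
\]
where $\tau$ is the left translation action. Before proceeding I would confirm that $(E/G,\mathcal{L}/G)$ is again weakly left-resolving and set-finite --- this is inherited from $(E,\mathcal{L})$, which is equivariantly isomorphic to a skew product built over $(E/G,\mathcal{L}/G)$ --- so that the results quoted below apply to it. Then Theorem~\ref{induceact2}, applied to the underlying equivariant isomorphism of $C^*$-systems (which intertwines the regular representations and hence identifies the \emph{reduced} crossed products as well as the full ones), gives
\[
C^*(E,\mathcal{L})\times_{\alpha,r}G \;\cong\; C^*\bigl((E/G)\times_c G,(\mathcal{L}/G)_d\bigr)\times_{\tau,r}G .
\]
Since $d$ is label consistent, Proposition~\ref{prop:onewilldo} provides a $G$-equivariant isomorphism $C^*((E/G)\times_c G,(\mathcal{L}/G)_d)\cong C^*((E/G)\times_c G,(\mathcal{L}/G)_{\mathbf{1}})$, which again descends to the reduced crossed products. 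Finally, $c$ is label consistent, so Corollary~\ref{lciso} applied to $(E/G,\mathcal{L}/G)$ and $c$ yields
\[
C^*\bigl((E/G)\times_c G,(\mathcal{L}/G)_{\mathbf{1}}\bigr)\times_{\tau,r}G \;\cong\; C^*(E/G,\mathcal{L}/G)\otimes\mathcal{K}(\ell^2(G)).
\]
Composing these three isomorphisms proves the corollary.

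I do not anticipate a deep obstacle; the argument is largely bookkeeping, arranging the earlier results in the right order. The two points needing care are: (a) checking that each equivariant isomorphism of labeled graph systems appearing above passes from the full to the reduced crossed product --- this is standard, since such an isomorphism respects the regular representations and the canonical conditional expectations; and (b) verifying that the quotient $(E/G,\mathcal{L}/G)$ inherits the standing hypotheses. For (b), it is essential that the fundamental-domain assumption is precisely what makes both $c$ and $d$ label consistent (Theorem~\ref{gttlc}): label consistency of $c$ is exactly the hypothesis required by Corollary~\ref{lciso}, and label consistency of $d$ is what lets us replace $(\mathcal{L}/G)_d$ by $(\mathcal{L}/G)_{\mathbf{1}}$ through Proposition~\ref{prop:onewilldo}.
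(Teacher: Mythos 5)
Your proposal is correct and follows essentially the same route as the paper's own proof: apply Theorem~\ref{gttlc} to get label consistent $c,d$ and an equivariant isomorphism, pass to reduced crossed products via Theorem~\ref{induceact2}, replace $d$ by $\mathbf{1}$ using Proposition~\ref{prop:onewilldo}, and finish with Corollary~\ref{lciso}. Your additional remarks on checking that the quotient inherits the standing hypotheses and that equivariant isomorphisms descend to reduced crossed products are sensible care points that the paper leaves implicit.
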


\begin{proof}
By Theorem~\ref{gttlc} there are label consistent functions $c,d: E^1/G \to G$
such that
\[
((E, \mathcal{L}), G, \alpha) \cong ((E/G \times_c G, (\mathcal{L}/G)_d), G,
\tau),
\]
so we have
\[
C^*(E, \mathcal{L}) \times_{\alpha, r} G \cong C^*(E/G \times_c G,
(\mathcal{L}/G)_d) \times_{\tau,r} G .
\]
By Proposition~\ref{prop:onewilldo} and Corollary~\ref{lciso} we have
\begin{align*}
C^*(E/G \times_c G, (\mathcal{L}/G)_d) \times_{\tau,r} G & \cong C^*(E/G \times_c G, (\mathcal{L}/G)_\mathbf{1} ) \times_{\tau,r} G \\
& \cong C^*(E/G,
\mathcal{L}/G) \otimes \mathcal{K}(\ell^2(G))
\end{align*}

\noindent which gives the desired result.
\end{proof}

\end{document}